\numberwithin{equation}{section}
\theoremstyle{definition}
\numberwithin{equation}{section}
\newcommand{\ncom}{\newcommand}
\ncom{\beq}{\begin{equation}}
\ncom{\eeq}{\end{equation}}
\ncom{\bea}{\begin{eqnarray*}}
\ncom{\eea}{\end{eqnarray*}}
\ncom{\beqa}{\begin{eqnarray}}
\ncom{\eeqa}{\end{eqnarray}}
\ncom{\nno}{\nonumber}
\ncom{\non}{\nonumber}
\ncom{\ds}{\displaystyle}
\ncom{\half}{\frac{1}{2}}
\ncom{\mbx}{\makebox{.25cm}}
\ncom{\hs}{\mbox{\hspace{.25cm}}}
\ncom{\rar}{\rightarrow}
\ncom{\Rar}{\Rightarrow}
\ncom{\noin}{\noindent}
\ncom{\bc}{\begin{center}}
\ncom{\ec}{\end{center}}
\ncom{\sz}{\scriptsize}
\ncom{\rf}{\ref}
\ncom{\s}{\sqrt{2}}
\ncom{\sgm}{\sigma}
\ncom{\Sgm}{\Sigma}
\ncom{\psgm}{\sigma^{\prime}}
\ncom{\dt}{\delta}
\ncom{\Dt}{\Delta}
\ncom{\lmd}{\lambda}
\ncom{\Lmd}{\Lambda}
\ncom{\Th}{\Theta}
\ncom{\e}{\eta}
\ncom{\eps}{\epsilon}
\ncom{\pcc}{\stackrel{P}{>}}
\ncom{\lp}{\stackrel{L_{p}}{>}}
\ncom{\dist}{{\rm\,dist}}
\ncom{\sspan}{{\rm\,span}}
\ncom{\re}{{\rm Re\,}}
\ncom{\im}{{\rm Im\,}}
\ncom{\sgn}{{\rm sgn\,}}
\ncom{\ba}{\begin{array}}
\ncom{\ea}{\end{array}}
\ncom{\hone}{\mbox{\hspace{1em}}}
\ncom{\htwo}{\mbox{\hspace{2em}}}
\ncom{\hthree}{\mbox{\hspace{3em}}}
\ncom{\hfour}{\mbox{\hspace{4em}}}
\ncom{\vone}{\vskip 2ex}
\ncom{\vtwo}{\vskip 4ex}
\ncom{\vonee}{\vskip 1.5ex}
\ncom{\vthree}{\vskip 6ex}
\ncom{\vfour}{\vspace*{8ex}}
\ncom{\norm}{\|\;\;\|}
\ncom{\integ}[4]{\int_{#1}^{#2}\,{#3}\,d{#4}}
\ncom{\vspan}[1]{{{\rm\,span}\{ #1 \}}}
\ncom{\dm}[1]{ {\displaystyle{#1} } }
\ncom{\ri}[1]{{#1} \index{#1}}
\newtheorem{theorem}{\bf Theorem}[section]
\newtheorem{remark}{\bf Remark}[section]
\newtheorem{lemma}{Lemma}[section]
\newtheorem{corollary}{Corollary}[section]
\newtheorem{example}{Example}[section]
\newtheoremstyle
    {remarkstyle}
    {}
    {11pt}
    {}
    {}
    {\bfseries}
    {:}
    {     }
    {\thmname{#1} \thmnumber{#2} }
\theoremstyle{remarkstyle}
\def\eps{\varepsilon}
\begin{document}
\title{The $I$-Function Distribution and its Extensions}
\author{P. Vellaisamy}
\address{P. Vellaisamy, Department of Mathematics,
 Indian Institute of Technology Bombay, Powai, Mumbai 400076, INDIA.}
 \email{pv@math.iitb.ac.in}
\author[Kuldeep Kumar Kataria]{K. K. Kataria}
\address{Kuldeep Kumar Kataria, Department of Mathematics,
 Indian Institute of Technology Bombay, Powai, Mumbai 400076, INDIA.}
 \email{kulkat@math.iitb.ac.in}
\thanks{The research of K. K. Kataria was supported by UGC, Govt. of India.}
\subjclass[2010]{Primary : 60E05 ; Secondary : 33C60}
\keywords{$I$-function; $H$-function; Mellin transform.}
\begin{abstract}
In this paper we introduce a new probability distribution on $(0,\infty)$, associated with the $I$-function, namely, the $I$-function distribution. This distribution generalizes several known distributions with positive support. It is also shown that the distribution of products, quotients and powers of independent $I$-function variates are $I$-function variates. Another distribution called the $I$-function inverse Gaussian distribution is also introduced and studied. 
\end{abstract}

\maketitle
\section{Introduction}
\setcounter{equation}{0}
Springer and Thompson \cite{Springer721} showed that the probability density functions (pdf's) of products and quotients of independent beta, gamma and central Gaussian random variables (rv's) can be expressed in terms of the Meijer $G$-function (see \cite{Mathai2010}, p. 21). Mathai and Saxena \cite{Mathai1439} discussed the distribution of the product of two independent rv's whose pdf can be expressed as the product of two $H$-functions; see \cite{Fox395}, \cite{Mathai2010} for definition and more details on the $H$-function. Carter and Springer \cite{Springer542} introduced the $H$-function distribution which includes, among others, the gamma, beta, Weilbull, chi-square, exponential and the half-normal distribution as particular cases. Inayat-Hussain \cite{Hussain4119} generalized the $H$-function, namely, to the $\overline{H}$-function. The generalized Riemann zeta function, the polylogarithmic function of complex order and the exact partition function of the Gaussian free energy model in statistical mechanics are some special cases of the $\overline{H}$-function, which are not particular cases of the $H$-function. Rathie \cite{Rathie297} introduced the $I$-function, which includes the $\overline{H}$-function as a special case. The $I$-function is represented by the following Mellin-Barnes type contour integral
\begin{equation}\label{1.2}
I(z)=I^{m,n}_{p,q}\Bigg[z\left|
\begin{matrix}
    (a_i,A_i,\alpha_i)_{1,p}\\ 
    (b_j,B_j,\beta_j)_{1,q}
  \end{matrix}
\right.\Bigg]=\frac{1}{2\pi i}\int_{C}\chi(s)z^{-s}\,ds,
\end{equation}
where
\begin{equation}\label{1.3}
\chi(s)=\frac{\prod_{i=1}^{n}\Gamma^{\alpha_i}\left(1-a_i-A_is\right)\prod_{j=1}^{m}\Gamma^{\beta_j}\left(b_j+B_js\right)}{\prod_{i=n+1}^{p}\Gamma^{\alpha_i}\left(a_i+A_is\right)\prod_{j=m+1}^{q}\Gamma^{\beta_j}\left(1-b_j-B_js\right)}.
\end{equation}
An equivalent definition can be obtained on substituting $w=-s$ in (\ref{1.2}). In the above definition, $z\neq0,m, n, p, q$ are integers satisfying $0 \leq m \leq q$ and $0 \leq n \leq p$ with $\alpha_i,A_i>0$ for $i=1,2,\ldots,p$ and $\beta_j,B_j>0$ for $j=1,2,\ldots,q$. Also, $a_i$'s and $b_j$'s are complex numbers such that no singularity of $\Gamma^{\beta_j}\left(b_j+B_js\right)$ coincides with any singularity of $\Gamma^{\alpha_i}\left(1-a_i-A_is\right)$. An empty product is to be interpreted as unity. The path of integration $C$, in the complex $s$-plane runs from $c-i\infty$ to $c+i\infty$ for some real number $c$ such that the singularity of $\Gamma^{\beta_j}\left(b_j+B_js\right)$ lie entirely to the left of the path and the singularity of $\Gamma^{\alpha_i}\left(1-a_i-A_is\right)$ lie entirely to the right of the path. The conditions for convergence of the integral involved in (\ref{1.2}) are as follows (see \cite{Rathie297}). Let
\begin{eqnarray*}
\nabla&=&\sum_{j=1}^q\beta_jB_j-\sum_{i=1}^p\alpha_iA_i,\\
\Omega&=&\sum_{i=1}^p\left(\frac{1}{2}-\operatorname{Re}(a_i)\right)\alpha_i-\sum_{j=1}^q\left(\frac{1}{2}-\operatorname{Re}(b_j)\right)\beta_j,\\
\Delta&=&\sum_{j=1}^m\beta_jB_j-\sum_{j=m+1}^q\beta_jB_j+\sum_{i=1}^n\alpha_iA_i-\sum_{i=n+1}^p\alpha_iA_i,
\end{eqnarray*}
where $m,n$, $a_i,A_i,\alpha_i$ and $b_j,B_j,\beta_j$ appear in the definition of the $I$-function. The $I$-function is analytic if $\nabla\geq0$ and the integral in (\ref{1.2}) converges absolutely if $|\arg(z)|<\Delta\frac{\pi}{2}$, where $\Delta>0$. Also, if $|\arg(z)|=\Delta\frac{\pi}{2}$ with $\Delta\geq0$, then it converges absolutely under the following conditions: (i) $\nabla=0$ and $\Omega<-1$, (ii) $|\nabla|\neq0$ with $s=\sigma+it$, where $\sigma,t\in\mathbb{R}$ and are such that for $|t|\rightarrow\infty$ we have $\Omega+\sigma\nabla<-1$. The $\overline{H}$-function follows as a particular case when $\alpha_i=1$ for $i=n+1,\ldots,p$ and $\beta_j=1$ for $j=1,2,\ldots,m$ in (\ref{1.2}). Recently, the extension of the $I$-function to several complex variables (see \cite{Kumari285}, \cite{Prathima}) and its applications to wireless communication were studied by several authors (see \cite{Ansari1}, \cite{Ansari394}, \cite{Minghua5578}). The effect of Marichev-Saigo-Maeda fractional operators on the $I$-function has been recently studied by Kataria and Vellaisamy (see \cite{Kataria}).

The main purpose of this paper is to introduce a new probability distribution based on the $I$-function, which we call the $I$-function distribution. In Section 2, some preliminary results are stated that will be used in subsequent sections. The Mellin and Laplace transforms of the $I$-function are evaluated in Section 3. In Section 4, the $I$-function distribution is introduced and some results related to the distribution of products, quotients and powers of independent $I$-function variates are derived. Also, the $I$-function inverse Gaussian distribution is introduced and studied in Section 5. 

\section{Preliminaries}
\setcounter{equation}{0}
The $I$-function is connected to the generalized hypergeometric function ${}_pF_q$, the Meijer's $G$-function, the generalized Wright function ${}_p\psi_q$, the Fox's $H$-function and the $\overline{H}$-function by the following relationships:
\begin{eqnarray*}
I^{1,p}_{p,q+1}\Bigg[-z\left|
\begin{matrix}
    (1-a_i,1,1)_{1,p}\\ 
    (0,1,1)&(1-b_j,1,1)_{1,q}
  \end{matrix}
\right.\Bigg]&=&\frac{\prod_{i=1}^p\Gamma(a_i)}{\prod_{j=1}^q\Gamma(b_j)}{}_pF_q\left((a_i)_{1,p};(b_j)_{1,q};z\right);\\
I^{m,n}_{p,q}\Bigg[z\left|
\begin{matrix}
    (a_i,1,1)_{1,p}\\ 
    (b_j,1,1)_{1,q}
  \end{matrix}
\right.\Bigg]&=&G^{m,n}_{p,q}\Bigg[z\left|
\begin{matrix}
    (a_i)_{1,p}\\ 
    (b_j)_{1,q}
  \end{matrix}
\right.\Bigg];\\
I^{1,p}_{p,q+1}\Bigg[-z\left|
\begin{matrix}
    (1-a_i,A_i,1)_{1,p}\\ 
    (0,1,1)&(1-b_j,B_j,1)_{1,q}
  \end{matrix}
\right.\Bigg]&=&{}_p\psi_q\Bigg[z\left|
\begin{matrix}
    (a_i,A_i)_{1,p}\\ 
    (b_j,B_j)_{1,q}
  \end{matrix}
\right.\Bigg];\\
I^{m,n}_{p,q}\Bigg[z\left|
\begin{matrix}
    (a_i,A_i,1)_{1,p}\\ 
    (b_j,B_j,1)_{1,q}
  \end{matrix}
\right.\Bigg]&=&H^{m,n}_{p,q}\Bigg[z\left|
\begin{matrix}
    (a_i,A_i)_{1,p}\\ 
    (b_j,B_j)_{1,q}
  \end{matrix}
\right.\Bigg];
\end{eqnarray*}
and
\begin{equation*}
I^{m,n}_{p,q}\Bigg[z\left|
\begin{matrix}
    (a_i,A_i,\alpha_i)_{1,n}&(a_i,A_i,1)_{n+1,p}\\ 
    (b_j,B_j,1)_{1,m}&(b_j,B_j,\beta_j)_{m+1,q}
  \end{matrix}
\right.\Bigg]=\overline{H}^{m,n}_{p,q}\Bigg[z\left|
\begin{matrix}
    (a_i,A_i,\alpha_i)_{1,n}&(a_i,A_i)_{n+1,p}\\ 
    (b_j,B_j)_{1,m}&(b_j,B_j,\beta_j)_{m+1,q}
  \end{matrix}
\right.\Bigg].
\end{equation*}
The following identities will be used later:
\begin{equation}\label{m2.8}
I^{1,0}_{0,1}\Bigg[z\left|
\begin{matrix}
     \\ 
    (b,B,1)
  \end{matrix}
\right.\Bigg]=\frac{1}{B}z^{\frac{b}{B}}e^{-z^{\frac{1}{B}}};\ \ \ \ 
I^{1,1}_{1,1}\Bigg[z\left|
\begin{matrix}
     (b-a+1,1,1)\\ 
    (b,1,1)
  \end{matrix}
\right.\Bigg]=\Gamma(a)z^b(1+z)^{-a};
\end{equation}
\begin{equation}
I^{0,1}_{1,1}\Bigg[z\left|
\begin{matrix}
     (b+1,1,1)\\ 
    (b,1,1)
  \end{matrix}
\right.\Bigg]=z^b,\ \ |z|>1;
\end{equation}
and
\begin{equation}\label{m2.10}
\Gamma(a+1)\delta^{a+b}I^{1,0}_{1,1}\Bigg[\frac{z}{\delta}\left|
\begin{matrix}
     (a+b+1,1,1)\\ 
    (b,1,1)
  \end{matrix}
\right.\Bigg]=z^b(\delta-z)^a,\ \ |z|<\delta.
\end{equation}
Also, from the definition of the $I$-function, the following properties are immediate.
\begin{eqnarray}
I^{m,n}_{p,q}\Bigg[\frac{1}{z}\left|
\begin{matrix}
    (a_i,A_i,\alpha_i)_{1,p}\\ 
    (b_j,B_j,\beta_j)_{1,q}
  \end{matrix}
\right.\Bigg]&=&I^{n,m}_{q,p}\Bigg[z\left|
\begin{matrix}
    (1-b_j,B_j,\beta_j)_{1,q}\\
    (1-a_i,A_i,\alpha_i)_{1,p}    
  \end{matrix}
\right.\Bigg];\label{2.8}\\
I^{m,n}_{p,q}\Bigg[z^\sigma\left|
\begin{matrix}
    (a_i,A_i,\alpha_i)_{1,p}\\ 
    (b_j,B_j,\beta_j)_{1,q}
  \end{matrix}
\right.\Bigg]&=&\frac{1}{\sigma}I^{m,n}_{p,q}\Bigg[z\left|
\begin{matrix}
    \left(a_i,\sigma^{-1}A_i,\alpha_i\right)_{1,p}\\ 
    \left(b_j,\sigma^{-1}B_j,\beta_j\right)_{1,q}
  \end{matrix}
\right.\Bigg],\ \ \sigma>0;\\
z^\sigma I^{m,n}_{p,q}\Bigg[z\left|
\begin{matrix}
    (a_i,A_i,\alpha_i)_{1,p}\\ 
    (b_j,B_j,\beta_j)_{1,q}
  \end{matrix}
\right.\Bigg]&=&I^{m,n}_{p,q}\Bigg[z\left|
\begin{matrix}
    (a_i+\sigma A_i,A_i,\alpha_i)_{1,p}\\ 
    (b_j+\sigma B_j,B_j,\beta_j)_{1,q}
  \end{matrix}
\right.\Bigg]\label{2.9}.
\end{eqnarray}
Note that whether a product of two $I$-functions is another $I$-function is still an open problem.
\section{The Mellin and Laplace transform of the $I$-Function}
\setcounter{equation}{0}
In this section, we evaluate the Mellin and Laplace transform of the $I$-function. The Mellin and Laplace transform of a continuous positive rv $X$ with pdf $f_X$ is defined by
\begin{equation}\label{2.1}
\mathcal{M}_{f_X}(s)=E[X^{s-1}]=\int_0^{\infty}x^{s-1}f_X(x)\,dx
\end{equation}
and
\begin{equation}\label{n2.1}
\mathcal{L}_{f_X}(r)=E[e^{-rX}]=\int_0^{\infty}e^{-rx}f_X(x)\,dx,
\end{equation}
respectively. Under some suitable restrictions \cite{Titchmarsh1937} on $\mathcal{M}_{f_X}(s)$, there exists an inversion integral
\begin{equation}\label{2.2}
f_X(x)=\frac{1}{2\pi i}\int_{c-i\infty}^{c+i\infty}x^{-s}\mathcal{M}_{f_X}(s)\,ds.
\end{equation}
The following results are due to Epstein \cite{Epstein370}.\\
\noindent (i) The Mellin transform of the positive rv $Y=aX$, $a>0$ is
\begin{equation}\label{2.3}
\mathcal{M}_{f_Y}(s)=a^{s-1}\mathcal{M}_{f_X}(s).
\end{equation}
\noindent (ii) The Mellin transform of the positive rv $Y=X^r$ is
\begin{equation}\label{2.4}
\mathcal{M}_{f_Y}(s)=\mathcal{M}_{f_X}(rs-r+1).
\end{equation}
\noindent (iii) Let $X_1,X_2$ be two positive continuous independent rv's. The Mellin transform of the rv $Y=X_1/X_2$ is
\begin{equation}\label{2.5}
\mathcal{M}_{f_Y}(s)=\mathcal{M}_{f_{X_1}}(s)\mathcal{M}_{f_{X_2}}(2-s).
\end{equation}
\noindent (iv) Let $X_1,X_2,\ldots,X_n$ be $n$ positive continuous independent rv's. Then the Mellin transform of the rv $Y=\prod_{j=1}^{n}X_j$ is
\begin{equation}\label{2.6}
\mathcal{M}_{f_Y}(s)=\prod_{j=1}^{n}\mathcal{M}_{f_{X_j}}(s).
\end{equation}
\subsection{The Mellin transform}
Assuming the convergence of the integral involved in definition of the $I$-function, for $\sigma>0$ and $\delta\neq0$, we have
\begin{eqnarray}\label{nn3.1}
I(\delta x^{\sigma})&=&I^{m,n}_{p,q}\Bigg[\delta x^{\sigma}\left|
\begin{matrix}
    (a_i,A_i,\alpha_i)_{1,p}\\ 
    (b_j,B_j,\beta_j)_{1,q}
  \end{matrix}
\right.\Bigg]\\
&=&\frac{1}{2\pi i}\int_{C}\chi(w)\delta^{-w}x^{-\sigma w}\,dw.\nonumber
\end{eqnarray}
Substituting $s=\sigma w$ in the above integral and using (\ref{2.2}), the Mellin transform of the $I$-function (\ref{nn3.1}) is
\begin{equation}\label{n3.1}
\mathcal{M}_{I(\delta x^{\sigma})}(s)=\frac{\chi(\sigma^{-1}s)}{\sigma\delta^{\sigma^{-1}s}}.
\end{equation}
The above result can be extended to obtain the Mellin transform of product of two  $I$-functions. We state the following useful result in the form of a lemma.
\begin{lemma}\label{l3.1} The Mellin transform, $\mathcal{M}_{I(\delta x^{\sigma})I(\eta x^\mu)}(s)$, of the product of
\begin{equation*}
I(\delta x^\sigma)=I^{m,n}_{p,q}\Bigg[\delta x^\sigma\left|
\begin{matrix}
    (a_i,A_i,\alpha_i)_{1,p}\\ 
    (b_j,B_j,\beta_j)_{1,q}
  \end{matrix}
\right.\Bigg]\ \ \ \ \ \ \mathrm{and}\ \ \ \ \ 
I(\eta x^\mu)=I^{k,l}_{u,v}\Bigg[\eta x^\mu\left|
\begin{matrix}
    (c_i,C_i,\gamma_i)_{1,u}\\ 
    (d_j,D_j,\rho_j)_{1,v}
  \end{matrix}
\right.\Bigg]
\end{equation*}
for $\sigma>0,\mu\geq0$ and $\delta,\eta\neq 0$ is equal to 
\begin{equation*}
\frac{1}{\sigma\delta^{\sigma^{-1}s}}I^{n+k,m+l}_{q+u,p+v}\Bigg[\frac{\eta}{\delta^{\sigma^{-1}\mu}}\left|
\begin{matrix}
    (c_i,C_i,\gamma_i)_{1,l}&(1-b_j-\sigma^{-1}B_js,\mu\sigma^{-1} B_j,\beta_j)_{1,q}&(c_i,C_i,\gamma_i)_{l+1,u}\\ 
    (d_j,D_j,\rho_j)_{1,k}&(1-a_i-\sigma^{-1}A_is,\mu\sigma^{-1} A_i,\alpha_i)_{1,p}&(d_j,D_j,\rho_j)_{k+1,v}
  \end{matrix}\right.\Bigg].
\end{equation*}
\end{lemma}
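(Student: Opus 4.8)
The plan is to represent the second factor by its defining Mellin--Barnes integral (\ref{1.2}) and thereby reduce the whole computation to the single--function transform (\ref{n3.1}). Let $\chi_2$ denote the integrand (\ref{1.3}) built from the parameters $(c_i,C_i,\gamma_i)_{1,u}$, $(d_j,D_j,\rho_j)_{1,v}$ and indices $k,l,u,v$, so that
\[
I(\eta x^\mu)=\frac{1}{2\pi i}\int_{C'}\chi_2(w)\,(\eta x^\mu)^{-w}\,dw .
\]
I would substitute this into $\mathcal M_{I(\delta x^\sigma)I(\eta x^\mu)}(s)=\int_0^\infty x^{s-1}I(\delta x^\sigma)I(\eta x^\mu)\,dx$ and interchange the $x$-- and $w$--integrations. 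This interchange, justified by Fubini's theorem under the stated convergence hypotheses on the two $I$-functions, is the only genuinely analytic point of the argument. After it, the inner integral is $\int_0^\infty x^{(s-\mu w)-1}I(\delta x^\sigma)\,dx=\mathcal M_{I(\delta x^\sigma)}(s-\mu w)$, which by (\ref{n3.1}) equals $\chi(\sigma^{-1}(s-\mu w))\big/\big(\sigma\delta^{\sigma^{-1}(s-\mu w)}\big)$.

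Next I would extract the prefactor and combine the surviving powers of $\delta$ and $\eta$. Writing $\delta^{-\sigma^{-1}(s-\mu w)}=\delta^{-\sigma^{-1}s}\,(\delta^{\sigma^{-1}\mu})^{w}$ and $\eta^{-w}(\delta^{\sigma^{-1}\mu})^{w}=(\eta/\delta^{\sigma^{-1}\mu})^{-w}$, the expression collapses to
\[
\mathcal M_{I(\delta x^\sigma)I(\eta x^\mu)}(s)=\frac{1}{\sigma\delta^{\sigma^{-1}s}}\cdot\frac{1}{2\pi i}\int_{C'}\chi_2(w)\,\chi\!\left(\sigma^{-1}(s-\mu w)\right)\left(\frac{\eta}{\delta^{\sigma^{-1}\mu}}\right)^{-w}\,dw .
\]
It then remains to show that $\chi_2(w)\,\chi(\sigma^{-1}(s-\mu w))$ is precisely the integrand (\ref{1.3}) of the $I$-function displayed in the statement, read as a function of $w$; the contour integral is then, by the very definition (\ref{1.2}), the asserted $I^{n+k,m+l}_{q+u,p+v}[\,\eta/\delta^{\sigma^{-1}\mu}\mid\cdots\,]$.

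The heart of the proof, and the step demanding the most care, is this identification of Gamma factors. Putting $t=\sigma^{-1}(s-\mu w)$, every argument of a Gamma factor in $\chi(t)$ becomes affine in $w$, e.g. $1-a_i-A_it=(1-a_i-\sigma^{-1}A_is)+(\mu\sigma^{-1}A_i)w$ and $b_j+B_jt=(b_j+\sigma^{-1}B_js)-(\mu\sigma^{-1}B_j)w$. The decisive observation is that the coefficient of $w$ carries the opposite sign to the coefficient of $s$ in $\chi$: a first--kind numerator factor $\Gamma^{\alpha_i}(1-a_i-A_it)$ of the first function reappears with a \emph{positive} $w$--coefficient and must therefore be recorded as a bottom--row parameter $(1-a_i-\sigma^{-1}A_is,\mu\sigma^{-1}A_i,\alpha_i)$, while each $\Gamma^{\beta_j}(b_j+B_jt)$ acquires a negative $w$--coefficient and migrates to the top row as $(1-b_j-\sigma^{-1}B_js,\mu\sigma^{-1}B_j,\beta_j)$. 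This sign reversal is exactly what produces the transposition $m\leftrightarrow n$ and $p\leftrightarrow q$ of the first function's indices inside $I^{n+k,m+l}_{q+u,p+v}$, whereas the parameters of $\chi_2$ are inserted unchanged. I would then verify, group by group, that (i) the factors $\Gamma^{\gamma_i}(1-c_i-C_iw)$ for $i\le l$ together with the $\Gamma^{\beta_j}(b_j+B_jt)$ for $j\le m$ furnish the $n'=m+l$ top numerator factors; (ii) the remaining top entries $\Gamma^{\gamma_i}(c_i+C_iw)$, $i>l$, and $\Gamma^{\beta_j}(1-b_j-B_jt)$, $j>m$, furnish the top denominator; (iii) the factors $\Gamma^{\rho_j}(d_j+D_jw)$, $j\le k$, together with $\Gamma^{\alpha_i}(1-a_i-A_it)$, $i\le n$, furnish the $m'=n+k$ bottom numerator factors; and (iv) the remaining $\Gamma^{\alpha_i}(a_i+A_it)$, $i>n$, and $\Gamma^{\rho_j}(1-d_j-D_jw)$, $j>k$, furnish the bottom denominator. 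Matching these against (\ref{1.3}) reproduces exactly the parameter arrays in the statement; the only residual bookkeeping is to check that $C'$ can be chosen to separate the two families of poles, so that the final contour integral is a bona fide $I$-function.
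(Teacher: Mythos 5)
Your proposal is correct and follows essentially the same route as the paper: represent the second $I$-function by its Mellin--Barnes integral, interchange the $x$- and $w$-integrations, apply (\ref{n3.1}) to the inner integral, and collect the powers of $\delta$ and $\eta$ before reading off the result from (\ref{1.2}). The only difference is that you spell out the Gamma-factor bookkeeping and the sign reversal that transposes $m\leftrightarrow n$ and $p\leftrightarrow q$, which the paper leaves implicit in its final appeal to the definition.
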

\begin{proof} From (\ref{2.1}), we have
\begin{eqnarray*}
\mathcal{M}_{I(\delta x^{\sigma})I(\eta x^\mu)}(s)&=&\int_0^{\infty}x^{s-1}I^{m,n}_{p,q}\Bigg[\delta x^{\sigma}\left|
\begin{matrix}
    (a_i,A_i,\alpha_i)_{1,p}\\ 
    (b_j,B_j,\beta_j)_{1,q}
  \end{matrix}
\right.\Bigg]
I^{k,l}_{u,v}\Bigg[\eta x^\mu\left|
\begin{matrix}
    (c_i,C_i,\gamma_i)_{1,u}\\ 
    (d_j,D_j,\rho_j)_{1,v}
  \end{matrix}
\right.\Bigg]\,dx
\\&=&\int_0^{\infty}x^{s-1}I^{m,n}_{p,q}\Bigg[\delta x^{\sigma}\left|
\begin{matrix}
    (a_i,A_i,\alpha_i)_{1,p}\\ 
    (b_j,B_j,\beta_j)_{1,q}
  \end{matrix}
\right.\Bigg]
\frac{1}{2\pi i}\int_{C}\chi^*(w)(\eta x^\mu)^{-w}\,dw\,dx,
\end{eqnarray*}
where
\begin{equation*}
\chi^*(w)=\frac{\prod_{i=1}^{l}\Gamma^{\gamma_i}\left(1-c_i-C_iw\right)\prod_{j=1}^{k}\Gamma^{\rho_j}\left(d_j+D_jw\right)}{\prod_{i=l+1}^{u}\Gamma^{\gamma_i}\left(c_i+C_iw\right)\prod_{j=k+1}^{v}\Gamma^{\rho_j}\left(1-d_j-D_jw\right)}.
\end{equation*}
We assume the conditions for absolute convergence of integral involved in above equation holds. By interchanging the order of integrals, we get
\begin{eqnarray*}
\mathcal{M}_{I(\delta x^{\sigma})I(\eta x^\mu)}(s)&=&\frac{1}{2\pi i}\int_{C}\chi^*(w)\eta^{-w}\int_0^{\infty}x^{s-\mu w-1}I^{m,n}_{p,q}\Bigg[\delta x^{\sigma}\left|
\begin{matrix}
    (a_i,A_i,\alpha_i)_{1,p}\\ 
    (b_j,B_j,\beta_j)_{1,q}
  \end{matrix}
\right.\Bigg]\,dx\,dw\\
&=&\frac{1}{2\pi i}\int_{C}\chi^*(w)\eta^{-w}\mathcal{M}_{I(\delta x^{\sigma})}(s-\mu w)\,dw.\\
&=&\frac{1}{\sigma\delta^{\sigma^{-1}s}}\frac{1}{2\pi i}\int_{C}\chi^*(w)\chi(\sigma^{-1}s-\sigma^{-1}\mu w)\left(\eta\delta^{-\sigma^{-1}\mu}\right)^{-w}\,dw.
\end{eqnarray*}
The proof is complete by using definition of the $I$-function (\ref{1.2}).
\end{proof}
\subsection{The Laplace transform}
\noindent  For simplicity, we use an equivalent definition of the $I$-function to determine its Laplace transform. Moreover, the conditions for absolute convergence of integral involved in the definition of $I(\delta x^{\sigma})$ is assumed. From (\ref{n2.1}), we have
\begin{eqnarray}\label{n3.2}
\mathcal{L}_{I(\delta x^{\sigma})}(r)&=&\int_0^{\infty}e^{-rx}I(\delta x^{\sigma})\,dx\nonumber\\
&=&\int_0^{\infty}\frac{e^{-rx}}{2\pi i}\int_{C}\chi(-s)(\delta x^{\sigma})^s\,ds\,dx\nonumber\\
&=&\frac{1}{2\pi i}\int_{C}\chi(-s)\delta^s\int_0^{\infty}x^{\sigma s}e^{-rx}\,dx\,ds\nonumber\\
&=&\frac{1}{r}\frac{1}{2\pi i}\int_{C}\Gamma(1+\sigma s)\chi(-s)\left(\frac{\delta}{r^{\sigma}}\right)^s\,ds\nonumber\\
&=&\frac{1}{r}I^{m,n+1}_{p+1,q}\Bigg[\frac{\delta}{r^{\sigma}}\left|
\begin{matrix}
    (0,\sigma,1)&(a_i,A_i,\alpha_i)_{1,p}\\ 
    (b_j,B_j,\beta_j)_{1,q}
  \end{matrix}
\right.\Bigg]\nonumber\\
&=&\frac{r^{\sigma-1}}{\delta}I^{m,n+1}_{p+1,q}\Bigg[\frac{\delta}{r^{\sigma}}\left|
\begin{matrix}
    (\sigma,\sigma,1)&(a_i+A_i,A_i,\alpha_i)_{1,p}\\ 
    (b_j+B_j,B_j,\beta_j)_{1,q}
  \end{matrix}
\right.\Bigg]\nonumber\ \ \ \ (\mathrm{using}\ (\ref{2.9}))\\
&=&\frac{r^{\sigma-1}}{\delta}I^{n+1,m}_{q,p+1}\Bigg[\frac{r^{\sigma}}{\delta}\left|
\begin{matrix}
    (1-b_j-B_j,B_j,\beta_j)_{1,q}\\
    (1-\sigma,\sigma,1)&(1-a_i-A_i,A_i,\alpha_i)_{1,p} 
      \end{matrix}
\right.\Bigg],
\end{eqnarray}
where the last step follows from (\ref{2.8}).

\section{The $I$-Function Distribution}
\setcounter{equation}{0}
Here, we introduce a new probability distribution associated with the $I$-function. Let $X>0$ be a rv with pdf
\begin{equation}\label{4.1}
f_X(x) =\left\{
	\begin{array}{ll}
	   k_0I^{m,n}_{p,q}\Bigg[\delta x^{\sigma}\left|
\begin{matrix}
    (a_i,A_i,\alpha_i)_{1,p}\\ 
    (b_j,B_j,\beta_j)_{1,q}
  \end{matrix}
\right.\Bigg], & \mbox{} x>0,\\\\
		0  & \mbox{} \mathrm{otherwise},
	\end{array}
\right.
\end{equation}
where $\sigma>0$, $\delta\neq0$ and $(a_i,A_i,\alpha_i)_{1,p},(b_j,B_j,\beta_j)_{1,q},m,n$ must conform to those restrictions in the definition of the $I$-function (\ref{1.2}), such that $f_X(x)\geq 0$. Also, $k_0$ is such that $\int_0^{\infty}f_X(x)\,dx=1$.
\subsubsection*{(a) Moments}
The $r$-th moment for the $I$-function distribution can be obtained using the Mellin integral transform as follows.
\begin{eqnarray*}
\mu'_r&=&\mathcal{M}_{f_X}(r+1)\\
&=&k_0\mathcal{M}_{I(\delta x^{\sigma})}(r+1)\\
&=&\frac{k_0\chi(\sigma^{-1}+\sigma^{-1}r)}{\sigma\delta^{(r+1)\sigma^{-1}}},
\end{eqnarray*}
using (\ref{n3.1}) and $\chi(.)$ is defined in (\ref{1.3}). The normalizing constant of the $I$-function distribution is obtained from $\mu'_0=1$ as
\begin{equation*}
k_0=\frac{\sigma\delta^{\sigma^{-1}}}{\chi(\sigma^{-1})}.
\end{equation*}
\subsubsection*{(b) The characteristic function}
The characteristic function of the $I$-function distribution is given by
\begin{eqnarray*}
\phi_X(t)&=&k_0\mathcal{L}_{I(\delta x^{\sigma})}(-it)\\
&=&\frac{k_0(-it)^{\sigma-1}}{\delta}I^{n+1,m}_{q,p+1}\Bigg[\frac{(-it)^{\sigma}}{\delta}\left|
\begin{matrix}
    (1-b_j-B_j,B_j,\beta_j)_{1,q}\\
    (1-\sigma,\sigma,1)&(1-a_i-A_i,A_i,\alpha_i)_{1,p} 
      \end{matrix}
\right.\Bigg],
\end{eqnarray*}
where the last step follows from (\ref{n3.2}). Hence, the moment generating function is
\begin{eqnarray*}
M_X(t)&=&\phi_X(-it)\\
&=&\frac{k_0(-t)^{\sigma-1}}{\delta}I^{n+1,m}_{q,p+1}\Bigg[\frac{(-t)^{\sigma}}{\delta}\left|
\begin{matrix}
    (1-b_j-B_j,B_j,\beta_j)_{1,q}\\
    (1-\sigma,1,1)&(1-a_i-A_i,A_i,\alpha_i)_{1,p} 
      \end{matrix}
\right.\Bigg].
\end{eqnarray*}
\subsubsection*{(c) The cumulative distribution function (cdf)}
The Mellin transform of the cdf $F_X(x)$ of a rv $X$ is connected to its pdf $f_X(x)$ by the following relationship (see \cite{Springer1979}, p. 99). 
\begin{equation}\label{n3.4}
\mathcal{M}_{1-F_X(x)}(s)=s^{-1}\mathcal{M}_{f_X}(s+1).
\end{equation}
Therefore the cdf of the $I$-function distribution is given by
\begin{equation*}
F_X(x)=1-\frac{k_0}{\sigma\delta^{\sigma^{-1}}}I^{m+1,n}_{p+1,q+1}\Bigg[\delta^{\sigma^{-1}} x\left|
\begin{matrix}
    (a_i+\sigma^{-1}A_i,\sigma^{-1}A_i,\alpha_i)_{1,p}&(1,1,1)\\ 
    (0,1,1)&(b_j+\sigma^{-1}B_j,\sigma^{-1}B_j,\beta_j)_{1,q}
  \end{matrix}
  \right.\Bigg].
\end{equation*}
And hence, the survival function is
\begin{equation*}
\overline{F}_X(x)=\frac{k_0}{\sigma\delta^{\sigma^{-1}}}I^{m+1,n}_{p+1,q+1}\Bigg[\delta^{\sigma^{-1}} x\left|
\begin{matrix}
    (a_i+\sigma^{-1}A_i,\sigma^{-1}A_i,\alpha_i)_{1,p}&(1,1,1)\\ 
    (0,1,1)&(b_j+\sigma^{-1}B_j,\sigma^{-1}B_j,\beta_j)_{1,q}
  \end{matrix}
  \right.\Bigg].
\end{equation*}
The hazard rate function $\lambda_X(x)$ is given by
\begin{eqnarray*}
\lambda_X(x)&=&\frac{f_X(x)}{\overline{F}_X(x)}\\
&=&\frac{\sigma\delta^{\sigma^{-1}}I^{m,n}_{p,q}\Bigg[\delta x^{\sigma}\left|
\begin{matrix}
    (a_i,A_i,\alpha_i)_{1,p}\\ 
    (b_j,B_j,\beta_j)_{1,q}
  \end{matrix}
\right.\Bigg]
}{
I^{m+1,n}_{p+1,q+1}\Bigg[\delta^{\sigma^{-1}} x\left|
\begin{matrix}
    (a_i+\sigma^{-1}A_i,\sigma^{-1}A_i,\alpha_i)_{1,p}&(1,1,1)\\ 
    (0,1,1)&(b_j+\sigma^{-1}B_j,\sigma^{-1}B_j,\beta_j)_{1,q}
  \end{matrix}
  \right.\Bigg]}.
\end{eqnarray*}
\subsubsection*{(d) Order statistics}
Let $X_{(1)}\leq X_{(2)}\leq\ldots\leq X_{(N)}$ denotes the order statistics of a random sample $X_{1},X_{2},\ldots, X_{N}$ of size $N$ drawn from the $I$-function distribution. The pdf of the $j$-th order statistics is given by
\begin{equation*}
f_{X_{(j)}}(x)=\frac{N!}{(j-1)!(N-j)!}f_{X}(x)\left\{F_{X}(x)\right\}^{j-1}\left\{\overline{F}_{X}(x)\right\}^{N-j}.
\end{equation*}
Therefore, the pdf of the minimum and the maximum order statistics is given by
\begin{eqnarray*}
f_{X_{(1)}}(x)&=&\frac{Nk_0^N}{\sigma^{N-1}\delta^{(N-1)\sigma^{-1}}}I^{m,n}_{p,q}\Bigg[\delta x^{\sigma}\left|
\begin{matrix}
    (a_i,A_i,\alpha_i)_{1,p}\\ 
    (b_j,B_j,\beta_j)_{1,q}
  \end{matrix}
\right.\Bigg]
\\&&\left\{I^{m+1,n}_{p+1,q+1}\Bigg[\delta^{\sigma^{-1}} x\left|
\begin{matrix}
    (a_i+\sigma^{-1}A_i,\sigma^{-1}A_i,\alpha_i)_{1,p}&(1,1,1)\\ 
    (0,1,1)&(b_j+\sigma^{-1}B_j,\sigma^{-1}B_j,\beta_j)_{1,q}
  \end{matrix}
  \right.\Bigg]\right\}^{N-1}
\end{eqnarray*}
and
\begin{eqnarray*}
f_{X_{(N)}}(x)&=&Nk_0I^{m,n}_{p,q}\Bigg[\delta x^{\sigma}\left|
\begin{matrix}
    (a_i,A_i,\alpha_i)_{1,p}\\ 
    (b_j,B_j,\beta_j)_{1,q}
  \end{matrix}
\right.\Bigg]
\Bigg\{1-\frac{k_0}{\sigma\delta^{\sigma^{-1}}}
\\&&I^{m+1,n}_{p+1,q+1}\Bigg[\delta^{\sigma^{-1}} x\Bigg|\left.
\begin{matrix}
    (a_i+\sigma^{-1}A_i,\sigma^{-1}A_i,\alpha_i)_{1,p}&(1,1,1)\\ 
    (0,1,1)&(b_j+\sigma^{-1}B_j,\sigma^{-1}B_j,\beta_j)_{1,q}
  \end{matrix}
  \Bigg]\right\}^{N-1}.
\end{eqnarray*}
\subsection{Some special cases} 
The density of the $H$-function distribution (see \cite{Springer542}),
\begin{equation}
f_X(x) =\left\{
	\begin{array}{ll}
	   \frac{\delta}{\chi(1)}H^{m,n}_{p,q}\Bigg[\delta x\left|
\begin{matrix}
    (a_i,A_i)_{1,p}\\ 
    (b_j,B_j)_{1,q}
  \end{matrix}
\right.\Bigg], & \mbox{} x>0,\\\\
		0  & \mbox{} \mathrm{otherwise},
	\end{array}
\right.
\end{equation}
follows as special case of the $I$-function distribution (\ref{4.1}) when $\sigma=1$, $\alpha_i$'s and $\beta_j$'s for all $i$ and $j$ equals unity. Hence, the other standard non-negative distributions are particular cases. We next give the densities of the well-known distributions in terms of the $I$-function. Let $X$ be a rv which follows\\
(i) the gamma distribution with parameters $\theta,\lambda>0$. Then
\begin{equation}\label{n3.8}
f_X(x)=\frac{1}{\lambda^{\theta}\Gamma(\theta)}x^{\theta-1}e^{-\frac{x}{\lambda}}
=\frac{1}{\lambda\Gamma(\theta)}I^{1,0}_{0,1}\Bigg[\frac{x}{\lambda}\left|
\begin{matrix}
    \\ 
    (\theta-1,1,1)
  \end{matrix}
\right.\Bigg],\ \ x>0.
\end{equation}
(ii) the exponential distribution (put $\theta=1$ in (\ref{n3.8})) with parameter $\lambda>0$. Then
\begin{equation}\label{m3.9}
f_X(x)=\frac{1}{\lambda}e^{-\frac{x}{\lambda}}
=\frac{1}{\lambda}I^{1,0}_{0,1}\Bigg[\frac{x}{\lambda}\left|
\begin{matrix}
    \\ 
    (0,1,1)
  \end{matrix}
\right.\Bigg],\ \ x>0.
\end{equation}
(iii) the chi-square distribution (put $\theta=\frac{\nu}{2}$ and $\lambda=2$ in (\ref{n3.8})) with parameter $\nu\in\mathbb{N}$ denotes the degree of freedom. Then
\begin{equation*}
f_X(x)=\frac{1}{2^{\frac{\nu}{2}}\Gamma\left(\frac{\nu}{2}\right)}x^{\frac{\nu}{2}-1}e^{-\frac{x}{2}}
=\frac{1}{2\Gamma\left(\frac{\nu}{2}\right)}I^{1,0}_{0,1}\Bigg[\frac{x}{2}\left|
\begin{matrix}
    \\ 
    \left(\frac{\nu}{2}-1,1,1\right)
  \end{matrix}
\right.\Bigg],\ \ x>0.
\end{equation*}
(iv) the Weibull distribution with parameters $\theta,\lambda>0$. Then
\begin{equation}\label{n3.9}
f_X(x)=\frac{\theta}{\lambda} x^{\theta-1}e^{-\frac{x^{\theta}}{\lambda}}
=\frac{1}{\lambda^{\frac{1}{\theta}}}I^{1,0}_{0,1}\Bigg[\frac{x}{\lambda^{\frac{1}{\theta}}}\left|
\begin{matrix}
    \\ 
    \left(1-\frac{1}{\theta},\frac{1}{\theta},1\right)
  \end{matrix}
\right.\Bigg],\ \  x>0.
\end{equation}
(v) the Rayleigh distribution (put $\theta=2$ and $\lambda=2\nu^2$ in (\ref{n3.9})) with parameter $\nu>0$. Then
\begin{equation*}
f_X(x)=\frac{1}{\nu^2} xe^{-\frac{x^2}{2\nu^2}}
=\frac{1}{\nu\sqrt{2}}I^{1,0}_{0,1}\Bigg[\frac{x}{\nu\sqrt{2}}\left|
\begin{matrix}
    \\ 
    \left(\frac{1}{2},\frac{1}{2},1\right)
  \end{matrix}
\right.\Bigg],\ \ x>0.
\end{equation*}
(vi) the Maxwell distribution with parameter $\lambda>0$. Then
\begin{equation*}
f_X(x)=\frac{4}{\lambda^3\sqrt{\pi}}x^{2}e^{-\frac{x^2}{\lambda^2}}
=\frac{2}{\lambda\sqrt{\pi}}I^{1,0}_{0,1}\Bigg[\frac{x}{\lambda}\left|
\begin{matrix}
    \\ 
    \left(1,\frac{1}{2},1\right)
  \end{matrix}
\right.\Bigg],\ \ x>0.
\end{equation*}
(vii) the half-normal distribution with parameter $\lambda>0$. Then
\begin{equation}\label{m3.11}
f_X(x)=\frac{\sqrt{2}}{\lambda\sqrt{\pi}}e^{-\frac{x^2}{2\lambda^2}}
=\frac{1}{\lambda\sqrt{2\pi}}I^{1,0}_{0,1}\Bigg[\frac{x}{\lambda\sqrt{2}}\left|
\begin{matrix}
    \\ 
    \left(0,\frac{1}{2},1\right)
  \end{matrix}
\right.\Bigg],\ \ x>0.
\end{equation}
(viii) the half-Cauchy distribution with parameter $\lambda>0$. Then
\begin{equation}\label{m3.12}
f_X(x)=\frac{2\lambda}{\pi}\frac{1}{(\lambda^2+x^2)}
=\frac{1}{\lambda\pi}I^{1,1}_{1,1}\Bigg[\frac{x}{\lambda}\left|
\begin{matrix}
    \left(0,\frac{1}{2},1\right)\\
    \left(0,\frac{1}{2},1\right)
  \end{matrix}
\right.\Bigg],\ \ x>0.
\end{equation}
(ix) the beta distribution of first kind with parameters $\theta,\lambda>0$. Then
\begin{equation*}
f_X(x)=\frac{\Gamma(\theta+\lambda)}{\Gamma(\theta)\Gamma(\lambda)}x^{\theta-1}(1-x)^{\lambda-1}
=\frac{\Gamma(\theta+\lambda)}{\Gamma(\theta)}I^{1,0}_{1,1}\Bigg[x\left|
\begin{matrix}
    (\theta+\lambda-1,1,1)\\ 
    (\theta-1,1,1)
  \end{matrix}
\right.\Bigg],\ \ 0<x<1.
\end{equation*}
(x) the beta distribution of second kind with parameters $\theta,\lambda>0$. Then
\begin{equation*}
f_X(x)=\frac{\Gamma(\theta+\lambda)}{\Gamma(\theta)\Gamma(\lambda)}\left(\frac{\lambda}{\theta}\right)^{\theta}x^{\theta-1}\left(1+\frac{\lambda x}{\theta}\right)^{-(\theta+\lambda)}
=\frac{\lambda}{\theta\Gamma(\theta)\Gamma(\lambda)}I^{1,1}_{1,1}\Bigg[\frac{\lambda x}{\theta}\left|
\begin{matrix}
    (-\lambda,1,1)\\ 
    (\theta-1,1,1)
  \end{matrix}
\right.\Bigg],\ \ x>0.
\end{equation*}
(xi) the power function distribution with parameter $\theta>0$. Then
\begin{equation}\label{n3.10}
f_X(x)=\theta x^{\theta-1}
=\theta I^{1,0}_{1,1}\Bigg[x\left|
\begin{matrix}
    (\theta,1,1)\\ 
    (\theta-1,1,1)
  \end{matrix}
\right.\Bigg],\ \ 0<x<1.
\end{equation}
(xii) the uniform distribution on $(0,1)$ (put $\theta=1$ in (\ref{n3.10})). Then
\begin{equation}\label{u3.11}
f_X(x)=1
=I^{1,0}_{1,1}\Bigg[x\left|
\begin{matrix}
    (1,1,1)\\ 
    (0,1,1)
  \end{matrix}
\right.\Bigg],\ \ 0<x<1.
\end{equation}
(xiii) the Pareto distribution with parameter $\lambda>0$. Then
\begin{equation*}
f_X(x)=\lambda x^{-(\lambda+1)}
=\lambda I^{0,1}_{1,1}\Bigg[x\left|
\begin{matrix}
    (-\lambda,1,1)\\ 
    (-\lambda-1,1,1)
  \end{matrix}
\right.\Bigg],\ \ x>1.
\end{equation*}
(xiv) the half-student distribution with parameter $\nu>0$. Then
\begin{equation*}
f_X(x)=\frac{2\Gamma\left(\frac{\nu+1}{2}\right)}{\sqrt{\nu\pi}\Gamma\left(\frac{\nu}{2}\right)}\left(1+\frac{x^2}{\nu}\right)^{-\frac{\nu+1}{2}}
=\frac{1}{\sqrt{\nu\pi}\Gamma\left(\frac{\nu}{2}\right)} I^{1,1}_{1,1}\Bigg[\frac{x}{\sqrt{\nu}}\left|
\begin{matrix}
    \left(\frac{1-\nu}{2},\frac{1}{2},1\right)\\ 
    \left(0,\frac{1}{2},1\right)
  \end{matrix}
\right.\Bigg],\ \ x>0.
\end{equation*}
(xv) the $F$ distribution with parameters $\theta,\lambda>0$. Then
\begin{equation*}
f_X(x)=\frac{\Gamma\left(\frac{\theta+\lambda}{2}\right)\theta^{\frac{\theta}{2}}\lambda^{\frac{\lambda}{2}}}{\Gamma\left(\frac{\theta}{2}\right)\Gamma\left(\frac{\lambda}{2}\right)}x^{\frac{\theta}{2}-1}\left(\lambda+\theta x\right)^{-\frac{(\theta+\lambda)}{2}}
=\frac{\theta}{\lambda\Gamma\left(\frac{\theta}{2}\right)\Gamma\left(\frac{\lambda}{2}\right)}I^{1,1}_{1,1}\Bigg[\frac{\theta x}{\lambda}\left|
\begin{matrix}
    \left(-\frac{\lambda}{2},1,1\right)\\ 
    \left(\frac{\theta}{2}-1,1,1\right)
  \end{matrix}
\right.\Bigg],\ \ x>0.
\end{equation*}
(xvi) the general hypergeometric distribution \cite{Mathai127}. Then
\begin{eqnarray*}
f_X(x)&=&\frac{da^{\frac{c}{d}}\Gamma(\alpha)\Gamma\left(\beta-\frac{c}{d}\right)}{\Gamma\left(\frac{c}{d}\right)\Gamma(\beta)\Gamma\left(\alpha-\frac{c}{d}\right)}x^{c-1}{}_1F_1\left(\alpha;\beta;-ax^d\right)\\
&=&\frac{a^{\frac{1}{d}}\Gamma\left(\beta-\frac{c}{d}\right)}{\Gamma\left(\frac{c}{d}\right)\Gamma\left(\alpha-\frac{c}{d}\right)}I^{1,1}_{1,2}\Bigg[a^{\frac{1}{d}}x\left|
\begin{matrix}
    \left(1-\alpha+\frac{c-1}{d},\frac{1}{d},1\right)\\ 
    \left(\frac{c-1}{d},\frac{1}{d},1\right)&\left(1-\beta+\frac{c-1}{d},\frac{1}{d},1\right)
  \end{matrix}
\right.\Bigg],\ \ x>0.
\end{eqnarray*}
The above relationships can easily be obtained using identities (\ref{m2.8})-(\ref{m2.10}) and properties of the $I$-function (\ref{2.8})-(\ref{2.9}).
\subsection{Properties of the $I$-function distribution}
\setcounter{equation}{0}
\noindent Next we show that the class of $I$-function variates is closed under multiplication and quotients of independent $I$-function variates. Moreover, this class is also closed under positive scalar multiplication and rational power of the $I$-function variate.
\begin{theorem}\label{t4.1}
Let $X_1,X_2,\ldots,X_N$ be $N$ independent $I$-function rv's with pdf's $f_{X_1}(x_1),$ $f_{X_2}(x_2),\ldots,f_{X_N}(x_N)$, respectively, where
\begin{equation}\label{r4.1}
f_{X_l}(x_l)=\frac{\sigma_l\delta_l^{\sigma_l^{-1}}}{\chi_l(\sigma_l^{-1})}I^{m_l,n_l}_{p_l,q_l}\Bigg[\delta_l x_l^{\sigma_l}\left|
\begin{matrix}
    (a_{li},A_{li},\alpha_{li})_{1,p_l}\\ 
    (b_{lj},B_{lj},\beta_{lj})_{1,q_l}
  \end{matrix}
\right.\Bigg],\ \ x_l>0,
\end{equation}
and
\begin{equation*}
\chi_l(s)=\frac{\prod_{i=1}^{n_l}\Gamma^{\alpha_{li}}\left(1-a_{li}-A_{li}s\right)\prod_{j=1}^{m_l}\Gamma^{\beta_{lj}}\left(b_{lj}+B_{lj}s\right)}{\prod_{i=n_l+1}^{p_l}\Gamma^{\alpha_{li}}\left(a_{li}+A_{li}s\right)\prod_{j=m_l+1}^{q_l}\Gamma^{\beta_{lj}}\left(1-b_{lj}-B_{lj}s\right)},
\end{equation*}
for $l=1,2,\ldots,N$. Then, the pdf of the rv $Y=\prod_{l=1}^NX_l$ is given by
\begin{equation*}
f_{Y}(y)=kI^{m,n}_{p,q}\Bigg[\delta y\left|
\begin{matrix}
    (a_{i},A_{i},\alpha_{i})_{1,p}\\ 
    (b_{j},B_{j},\beta_{j})_{1,q}
  \end{matrix}
\right.\Bigg],\ \ y>0,
\end{equation*}
where $k=\prod_{l=1}^N\frac{\delta_l^{\sigma_l^{-1}}}{\chi_l(\sigma_l^{-1})}$, $m=\sum_{l=1}^Nm_l$, $n=\sum_{l=1}^Nn_l$, $p=\sum_{l=1}^Np_l$, $q=\sum_{l=1}^Nq_l$, $\delta=\prod_{l=1}^N\delta_l^{\sigma_l^{-1}}$ and the sequence of parameters are as follows:
\begin{eqnarray*}
&&(a_{i},A_{i},\alpha_{i})_{1,p}=(a_{1i},\sigma_l^{-1}A_{1i},\alpha_{1i})_{1,n_1}(a_{2i},\sigma_l^{-1}A_{2i},\alpha_{2i})_{1,n_2}\ldots(a_{Ni},\sigma_l^{-1}A_{Ni},\alpha_{Ni})_{1,n_N}\\&&(a_{1i},\sigma_l^{-1}A_{1i},\alpha_{1i})_{n_1+1,p_1}(a_{2i},\sigma_l^{-1}A_{2i},\alpha_{2i})_{n_2+1,p_2}\ldots(a_{Ni},\sigma_l^{-1}A_{Ni},\alpha_{Ni})_{n_N+1,p_N},\\
&&(b_{j},B_{j},\beta_{j})_{1,q}=(b_{1j},\sigma_l^{-1}B_{1j},\beta_{1j})_{1,m_1}(b_{2j},\sigma_l^{-1}B_{2j},\beta_{2j})_{1,m_2}\ldots(b_{Nj},\sigma_l^{-1}B_{Nj},\beta_{Nj})_{1,m_N}\\&&(b_{1j},\sigma_l^{-1}B_{1j},\beta_{1j})_{m_1+1,q_1}(b_{2j},\sigma_l^{-1}B_{2j},\beta_{2j})_{m_2+1,q_2}\ldots(b_{Nj},\sigma_l^{-1}B_{Nj},\beta_{Nj})_{m_N+1,q_N}.
\end{eqnarray*}
\end{theorem}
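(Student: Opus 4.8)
The plan is to reduce the statement to a computation with Mellin transforms, exploiting the multiplicative property (\ref{2.6}): the Mellin transform of a product of independent positive rv's factorizes. First I would record the Mellin transform of each factor. By (\ref{n3.1}) together with the normalizing constant found in part (a) of this section, namely $k_0^{(l)}=\sigma_l\delta_l^{\sigma_l^{-1}}/\chi_l(\sigma_l^{-1})$, one obtains
\[
\mathcal{M}_{f_{X_l}}(s)=\frac{\delta_l^{\sigma_l^{-1}}}{\chi_l(\sigma_l^{-1})}\,\frac{\chi_l(\sigma_l^{-1}s)}{\delta_l^{\sigma_l^{-1}s}},\qquad l=1,2,\ldots,N.
\]

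Next, applying (\ref{2.6}) to $Y=\prod_{l=1}^N X_l$ and collecting the $\delta_l$-powers with the normalizing factors gives
\[
\mathcal{M}_{f_Y}(s)=\prod_{l=1}^N\mathcal{M}_{f_{X_l}}(s)=k\,\delta^{-s}\prod_{l=1}^N\chi_l(\sigma_l^{-1}s),
\]
where $k=\prod_{l=1}^N\delta_l^{\sigma_l^{-1}}/\chi_l(\sigma_l^{-1})$ and $\delta=\prod_{l=1}^N\delta_l^{\sigma_l^{-1}}$, precisely the constants appearing in the statement.

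The heart of the argument is then to recognize $\prod_{l=1}^N\chi_l(\sigma_l^{-1}s)$ as the $\chi$-function of a single $I$-function. Writing out each $\chi_l(\sigma_l^{-1}s)$ from (\ref{1.3}), the effect of the argument $\sigma_l^{-1}s$ is merely to rescale each $A_{li}$ to $\sigma_l^{-1}A_{li}$ and each $B_{lj}$ to $\sigma_l^{-1}B_{lj}$, leaving the exponents $\alpha_{li},\beta_{lj}$ and the remaining structure intact. Since a product of Gamma powers concatenates cleanly, the numerator factors indexed $1\le i\le n_l$ (respectively $1\le j\le m_l$) from every $l$ merge into the first $n=\sum_l n_l$ (respectively $m=\sum_l m_l$) slots, while the denominator factors indexed $n_l<i\le p_l$ and $m_l<j\le q_l$ fill the remaining slots. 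This reproduces exactly the parameter list displayed in the theorem, so $\prod_{l=1}^N\chi_l(\sigma_l^{-1}s)=\chi(s)$, the $\chi$-function of the target $I^{m,n}_{p,q}$. Comparing with the $\sigma=1$ case of (\ref{n3.1}), which reads $\mathcal{M}_{I(\delta y)}(s)=\chi(s)\,\delta^{-s}$, we conclude $\mathcal{M}_{f_Y}(s)=k\,\mathcal{M}_{I(\delta y)}(s)$, and inverting via (\ref{2.2}) yields $f_Y(y)=k\,I^{m,n}_{p,q}[\delta y\,|\,\cdots]$ as claimed.

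The main obstacle is the bookkeeping in the third step: one must verify that the four index blocks (the two numerator blocks and the two denominator blocks of the Mellin--Barnes integrand) are assembled in the correct order, so that the merged integrand again has the canonical form (\ref{1.3}) with $m,n,p,q$ as stated. A minor point is to confirm that the contours and the absolute-convergence conditions of the constituent $I$-functions permit the factorization (\ref{2.6}) and the inversion (\ref{2.2}). As a consistency check, evaluating at $s=1$ gives $\mathcal{M}_{f_Y}(1)=k\,\delta^{-1}\prod_{l=1}^N\chi_l(\sigma_l^{-1})=1$, since $k=\delta/\prod_{l=1}^N\chi_l(\sigma_l^{-1})$, confirming that the resulting density is correctly normalized.
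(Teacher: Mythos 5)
Your proposal is correct and follows essentially the same route as the paper: compute $\mathcal{M}_{f_{X_l}}(s)=\delta_l^{\sigma_l^{-1}}\chi_l(\sigma_l^{-1}s)\big/\bigl(\delta_l^{\sigma_l^{-1}s}\chi_l(\sigma_l^{-1})\bigr)$, multiply via (\ref{2.6}), and invert the resulting Mellin--Barnes integral using (\ref{2.2}) and (\ref{1.2}). The only difference is that you spell out the bookkeeping of how the $\chi_l(\sigma_l^{-1}s)$ factors concatenate into a single $\chi$ and add a normalization check at $s=1$, details the paper leaves implicit.
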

\begin{proof}
From (\ref{2.6}), we have for $y>0$
\begin{equation*}
\mathcal{M}_{f_{Y}}(s)=\prod_{l=1}^{N}\mathcal{M}_{f_{X_l}}(s)=\prod_{l=1}^{N}\frac{\delta_l^{\sigma_l^{-1}}\chi_l(\sigma_l^{-1}s)}{\delta_l^{\sigma_l^{-1}s}\chi_l(\sigma_l^{-1})}.
\end{equation*}
Now using the inverse Mellin transform (\ref{2.2}), the pdf is given by
\begin{eqnarray*}
f_Y(y)&=&\left(\prod_{l=1}^{N}\frac{\delta_l^{\sigma_l^{-1}}}{\chi_l(\sigma_l^{-1})}\right)\frac{1}{2\pi i}\int_{c-i\infty}^{c+i\infty}\prod_{l=1}^{N}\chi_l(\sigma_l^{-1}s)\left(y\prod_{l=1}^{N}\delta_l^{\sigma_l^{-1}}\right)^{-s}\,ds,
\end{eqnarray*}
which on using (\ref{1.2}) gives the result.
\end{proof}

\begin{example}\label{e4.1}
\noindent Let $X_1,X_2,\ldots,X_n$ be $n$ independent rv's such that $X_i$ follows half-normal distribution for $i=1,2,\ldots,m$ and beta distribution for $i=m+1,\ldots,n$, with pdf's
\begin{eqnarray*}
f_{X_i}(x_i) &=& \frac{\sqrt{2}}{\sigma_i\sqrt{\pi}}e^{-\frac{x_i^2}{2\sigma_i^2}},\ \ \ x_i>0,\ \ i=1,2,\ldots,m,\\
\mathrm{and}\ \ \ \ \ \ f_{X_i}(x_i) &=& \frac{\Gamma(a_i+b_i)}{\Gamma(a_i)\Gamma(b_i)}x_i^{a_i-1}(1-x_i)^{b_i-1},\ \ \ 0<x_i<1,\ \ i=m+1,\ldots,n,
\end{eqnarray*}
where the parameters $\sigma_i,a_i,b_i>0$. The corresponding Mellin transforms (\ref{2.1}) are given by
\begin{equation*}
\mathcal{M}_{f_{X_i}}(s) =\left\{
	\begin{array}{ll}
	    \frac{1}{\sqrt{2\pi}\sigma_i}\left(\frac{1}{\sqrt{2}\sigma_i}\right)^{-s}\Gamma\left(\frac{s}{2}\right), & \mbox{} i=1,2,\ldots,m,\\\\
		\frac{\Gamma(a_i+b_i)\Gamma(a_i-1+s)}{\Gamma(a_i)\Gamma(a_i+b_i-1+s)},  & \mbox{} i=m+1,\ldots,n.
	\end{array}
\right.
\end{equation*}
Let $Y=\prod_{i=1}^nX_i$. Using (\ref{2.6}), its Mellin transform is
\begin{equation}\label{m4.5}
\mathcal{M}_{f_{Y}}(s)=\Gamma^m\left(\frac{s}{2}\right)\prod_{i=1}^m\frac{(\sqrt{2}\sigma_i)^s}{\sqrt{2\pi}\sigma_i}\prod_{i=m+1}^n\frac{\Gamma(a_i+b_i)\Gamma(a_i-1+s)}{\Gamma(a_i)\Gamma(a_i+b_i-1+s)}.
\end{equation}
Using (\ref{2.2}) and (\ref{m4.5}), the pdf of rv $Y$ is
\begin{eqnarray}\label{1.1}
f_{Y}(y) &=&\frac{k}{2\pi i}\int_{c-i\infty}^{c+i\infty}\Gamma^m\left(\frac{s}{2}\right)\prod_{i=m+1}^n\frac{\Gamma(a_i-1+s)}{\Gamma(a_i+b_i-1+s)}\left(y\prod_{i=1}^m\frac{1}{\sqrt{2}\sigma_i}\right)^{-s}\,ds\\
&=&kI^{n-m+1,0}_{n-m,n-m+1}\Bigg[\frac{2^{-\frac{m}{2}}y}{\prod_{i=1}^m\sigma_i}\left|
\begin{matrix}
    (a_i+b_i-1,1,1)_{m+1,n}\\ 
    \left(0,\frac{1}{2},m\right)&(a_i-1,1,1)_{m+1,n}
  \end{matrix}
\right.\Bigg],\ y>0,\nonumber
\end{eqnarray}
where $k=\prod_{i=1}^m\frac{1}{\sqrt{2\pi}\sigma_i}\prod_{i=m+1}^n\frac{\Gamma(a_i+b_i)}{\Gamma(a_i)} $. Note that the integral (\ref{1.1}) can also be represented in terms of the Fox's $H$-function as
\begin{equation*}
f_{Y}(y) = \frac{\prod_{i=m+1}^n\frac{\Gamma(a_i+b_i)}{\Gamma(a_i)}}{(2\pi)^{\frac{m}{2}}\prod_{i=1}^m\sigma_i}H^{n,0}_{n-m,n}\Bigg[\frac{2^{-\frac{m}{2}}y}{\prod_{i=1}^m\sigma_i}\left|
\begin{matrix}
    (a_i+b_i-1,1)_{m+1,n}\\ 
    (0,\frac{1}{2})_{1,m}&(a_i-1,1)_{m+1,n}
  \end{matrix}
\right.\Bigg],\ y>0.
\end{equation*}
\end{example}

\begin{example}
Let $X_1,X_2$ be two independent rv's such that $X_1$ is uniformly distributed $(\ref{u3.11})$ on $(0,1)$ and $X_2$ follows gamma distribution (\ref{n3.8}) with shape parameter $\theta=2$. Using Theorem \ref{t4.1}, the pdf of rv $Y=X_1X_2$ is
\begin{equation*}
f_Y(y)=\frac{1}{\lambda}I^{1,0}_{0,1}\Bigg[\frac{y}{\lambda}\left|
\begin{matrix}
    \\
    \left(0,1,1\right)
  \end{matrix}
\right.\Bigg],\ \ y>0,
\end{equation*}
which is an exponential distribution with parameter $\lambda$ (see \cite{Johnson1995}, p. 306).
\end{example}

\begin{theorem}
Let $X$ be an $I$-function rv with pdf (\ref{4.1}).\\
\noindent (a) The pdf of the rv $Y=aX,\ a>0$, is given by
\begin{equation}\label{n4.3}
f_{Y}(y)=\frac{\delta^{\sigma^{-1}}}{a\chi(\sigma^{-1})}I^{m,n}_{p,q}\Bigg[\frac{\delta^{\sigma^{-1}}y}{a}\left|
\begin{matrix}
     (a_i,\sigma^{-1}A_i,\alpha_i)_{1,p}\\ 
    (b_j,\sigma^{-1}B_j,\beta_j)_{1,q}
  \end{matrix}
\right.\Bigg],\ \ y>0.
\end{equation}
\noindent (b) The pdf of the rv $Y=X^r$ for $r\neq 0$ rational is given by
\begin{equation}\label{4.3}
f_{Y}(y)=\frac{\delta^{r\sigma^{-1}}}{\chi(\sigma^{-1})}I^{m,n}_{p,q}\Bigg[\delta^{r\sigma^{-1}} y\left|
\begin{matrix}
    (a_i-r\sigma^{-1}A_i+\sigma^{-1}A_i,r\sigma^{-1}A_i,\alpha_i)_{1,p}\\ 
    (b_j-r\sigma^{-1}B_j+\sigma^{-1}B_j,r\sigma^{-1}B_j,\beta_j)_{1,q}
  \end{matrix}
\right.\Bigg],\ \  y>0,
\end{equation}
when $r>0$ and 
\begin{equation}\label{4.4}
f_{Y}(y)=\frac{\delta^{r\sigma^{-1}}}{\chi(\sigma^{-1})}I^{n,m}_{q,p}\Bigg[\delta^{r\sigma^{-1}} y\left|
\begin{matrix}
    (1-b_j+r\sigma^{-1}B_j-\sigma^{-1}B_j,-r\sigma^{-1}B_j,\beta_j)_{1,q}\\
    (1-a_i+r\sigma^{-1}A_i-\sigma^{-1}A_i,-r\sigma^{-1}A_i,\alpha_i)_{1,p}
      \end{matrix}
\right.\Bigg],\ \ y>0,
\end{equation}
when $r<0$.
\end{theorem}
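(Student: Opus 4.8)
The plan is to obtain all three densities by a monotone change of variables on $(0,\infty)$ followed by the algebraic identities for the $I$-function, rather than by inverting a Mellin transform; this route makes transparent where the transposition of indices in (\ref{4.4}) originates. Throughout I write $k_0=\sigma\delta^{\sigma^{-1}}/\chi(\sigma^{-1})$ for the normalizing constant in (\ref{4.1}). For part (a), since $a>0$ the map $x\mapsto ax$ is an increasing bijection of $(0,\infty)$, so $f_Y(y)=a^{-1}f_X(y/a)=a^{-1}k_0\,I^{m,n}_{p,q}[\delta a^{-\sigma}y^{\sigma}\mid(a_i,A_i,\alpha_i)_{1,p},(b_j,B_j,\beta_j)_{1,q}]$. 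Writing $\delta a^{-\sigma}y^{\sigma}=\big((\delta^{\sigma^{-1}}/a)\,y\big)^{\sigma}$ and applying the exponent-scaling identity $I(z^{\sigma})=\sigma^{-1}I(z)$ (the unlabelled identity between (\ref{2.8}) and (\ref{2.9}), under which each $A_i,B_j$ is replaced by $\sigma^{-1}A_i,\sigma^{-1}B_j$), the internal exponent is reduced to $1$ and the factor $\sigma^{-1}$ combines with $k_0/a$ to give exactly $\delta^{\sigma^{-1}}/(a\chi(\sigma^{-1}))$, which is (\ref{n4.3}).

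For part (b) with $r>0$, the map $x\mapsto x^{r}$ is again an increasing bijection and $f_Y(y)=r^{-1}y^{1/r-1}f_X(y^{1/r})=r^{-1}k_0\,y^{1/r-1}I^{m,n}_{p,q}[\delta y^{\sigma/r}\mid\cdots]$. I would first apply the exponent-scaling identity with exponent $\sigma/r>0$, writing $\delta y^{\sigma/r}=(\delta^{r\sigma^{-1}}y)^{\sigma/r}$; this reduces the internal exponent to $1$, turns each $A_i$ into $r\sigma^{-1}A_i$ and each $B_j$ into $r\sigma^{-1}B_j$, and contributes a factor $r\sigma^{-1}$ that cancels the $1/r$ of the Jacobian. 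It then remains to absorb the leftover external power $y^{1/r-1}$: setting $z=\delta^{r\sigma^{-1}}y$ and using (\ref{2.9}) with shift $\tau=1/r-1$ sends each upper parameter $a_i\mapsto a_i+\tau r\sigma^{-1}A_i=a_i-r\sigma^{-1}A_i+\sigma^{-1}A_i$ (and similarly for $b_j$), which is precisely the parameter list in (\ref{4.3}). A short computation shows the accumulated powers of $\delta$ telescope to $\delta^{r\sigma^{-1}}/\chi(\sigma^{-1})$.

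The case $r<0$ requires an extra idea and is where I expect the only genuine obstacle. The same change of variables (now orientation-reversing, so the Jacobian carries $-1/r>0$) yields $f_Y(y)=-r^{-1}k_0\,y^{1/r-1}I^{m,n}_{p,q}[\delta y^{\sigma/r}\mid\cdots]$, but now the internal exponent $\sigma/r$ is \emph{negative}, so the exponent-scaling identity, valid only for a positive exponent, cannot be applied directly. The remedy is to invert the argument first via (\ref{2.8}): $I^{m,n}_{p,q}[\delta y^{\sigma/r}\mid(a_i,A_i,\alpha_i),(b_j,B_j,\beta_j)]=I^{n,m}_{q,p}[\delta^{-1}y^{-\sigma/r}\mid(1-b_j,B_j,\beta_j),(1-a_i,A_i,\alpha_i)]$. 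This simultaneously transposes $(m,n,p,q)\mapsto(n,m,q,p)$ and reflects the parameters $a_i\mapsto 1-a_i$, $b_j\mapsto 1-b_j$ — exactly the features of (\ref{4.4}) — and makes the internal exponent $-\sigma/r>0$. One then repeats the two steps of the $r>0$ case: writing $\delta^{-1}y^{-\sigma/r}=(\delta^{r\sigma^{-1}}y)^{-\sigma/r}$, the scaling identity brings the internal exponent to $1$, replaces the $B_j,A_i$ by $-r\sigma^{-1}B_j,-r\sigma^{-1}A_i$ (positive since $r<0$), and emits a factor $-r\sigma^{-1}$; then (\ref{2.9}) with $\tau=1/r-1$ absorbs $y^{1/r-1}$ into the parameters, producing $1-b_j+r\sigma^{-1}B_j-\sigma^{-1}B_j$ and $1-a_i+r\sigma^{-1}A_i-\sigma^{-1}A_i$. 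The minus sign from the Jacobian cancels against the minus sign in the scaling factor $-r\sigma^{-1}$, and the powers of $\delta$ again collapse to $\delta^{r\sigma^{-1}}/\chi(\sigma^{-1})$, giving (\ref{4.4}).

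The main obstacle is thus conceptual rather than computational: recognizing that for $r<0$ the naive use of the exponent-scaling identity fails and that (\ref{2.8}) must be invoked first, which is also what forces the transposed index pattern $I^{n,m}_{q,p}$ together with the reflected parameters. The remaining effort — tracking the constant $k_0$ and verifying that the powers of $\delta$ telescope correctly in each case — is routine bookkeeping. As an alternative, both parts also follow by combining Epstein's Mellin rules (\ref{2.3}) and (\ref{2.4}) with the explicit transform (\ref{n3.1}) and the inversion (\ref{2.2}), matching the resulting integrand against the definition of the $I$-function; I nonetheless prefer the change-of-variables route because it exhibits the origin of the transposition directly.
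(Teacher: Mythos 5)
Your argument is correct, and the parameter lists, index transposition and powers of $\delta$ all check out, but you reach the result by a genuinely different route from the paper. The paper works entirely on the Mellin side: it applies Epstein's rules (\ref{2.3}) and (\ref{2.4}) to get $\mathcal{M}_{f_Y}(s)=a^{s-1}\mathcal{M}_{f_X}(s)$ and $\mathcal{M}_{f_Y}(s)=\mathcal{M}_{f_X}(rs-r+1)$, inverts via (\ref{2.2}), and reads off the parameters by matching the integrand $\chi(\sigma^{-1}rs-\sigma^{-1}r+\sigma^{-1})$ against the contour-integral definition (\ref{1.2}); for $r<0$ the substitution $t=-r$ makes the coefficient of $s$ in each gamma factor negative, so the factors of type $\Gamma^{\beta_j}(b_j+B_js)$ must be reinterpreted as factors of type $\Gamma^{\alpha_i}(1-a_i-A_is)$, and this is what silently produces the transposed symbol $I^{n,m}_{q,p}$ with reflected parameters. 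You instead perform the elementary change of variables on the density and then push everything through the algebraic identities: the unlabelled exponent-scaling identity, the power-absorption rule (\ref{2.9}), and — crucially for $r<0$ — the argument-inversion rule (\ref{2.8}) applied before scaling, since the scaling identity holds only for a positive exponent. The two proofs are essentially dual to one another (the identities (\ref{2.8})--(\ref{2.9}) are themselves proved by manipulating the defining contour integral), but your version has the pedagogical advantage of making the origin of the $I^{n,m}_{q,p}$ transposition explicit rather than an artifact of pattern-matching the integrand, at the cost of somewhat more bookkeeping with the constants; you also correctly note the Mellin route as the alternative, which is precisely the paper's proof.
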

\begin{proof} (a) From (\ref{2.3}), we have
\begin{equation*}
\mathcal{M}_{f_{Y}}(s)=a^{s-1}\mathcal{M}_{f_{X}}(s)=\frac{a^{s-1}\delta^{\sigma^{-1}}\chi(\sigma^{-1}s)}{\delta^{\sigma^{-1}s}\chi(\sigma^{-1})}.
\end{equation*}
Using (\ref{2.2}), the pdf of $Y$ is 
\begin{eqnarray*}
f_Y(y)&=&\frac{\delta^{\sigma^{-1}}}{a\chi(\sigma^{-1})}\frac{1}{2\pi i}\int_{c-i\infty}^{c+i\infty}\chi(\sigma^{-1}s)\left(\frac{\delta^{\sigma^{-1}}y}{a}\right)^{-s}\,ds,\ \ \ \ y>0,
\end{eqnarray*}
which gives (\ref{n4.3}) on using (\ref{1.2}).\\
\noindent (b) From (\ref{2.4}), we have
\begin{equation*}
\mathcal{M}_{f_{Y}}(s)=\mathcal{M}_{f_{X}}(rs-r+1)=\frac{\delta^{r\sigma^{-1}}\chi(\sigma^{-1}rs-\sigma^{-1}r+\sigma^{-1})}{\delta^{r\sigma^{-1}s}\chi(\sigma^{-1})}.
\end{equation*}
Therefore,
\begin{equation}\label{4.5}
f_Y(y)=\frac{\delta^{r\sigma^{-1}}}{\chi(\sigma^{-1})}\frac{1}{2\pi i}\int_{c-i\infty}^{c+i\infty}\chi(\sigma^{-1}rs-\sigma^{-1}r+\sigma^{-1}) \left(y\delta^{r\sigma^{-1}}\right)^{-s}\,ds.
\end{equation}
When $r>0$, the lhs of (\ref{4.5}) on using (\ref{1.2}) gives (\ref{4.3}). When $r<0$, put $t=-r$ so that for $t>0$,
\begin{equation*}
f_Y(y)=\frac{\delta^{-t\sigma^{-1}}}{\chi(\sigma^{-1})}\frac{1}{2\pi i}\int_{c-i\infty}^{c+i\infty}\chi(-\sigma^{-1}ts+\sigma^{-1}t+\sigma^{-1}) \left(y\delta^{-t\sigma^{-1}}\right)^{-s}\,ds,\ \ \ \ y>0,
\end{equation*}
which on using (\ref{1.2}) gives (\ref{4.4}).
\end{proof}

\begin{example}
Let $Y$ be a rv defined in Example \ref{e4.1}. Consider $W=4Y$ and $Z=Y^{-1}$. Using (\ref{n4.3}) the pdf of rv $W$ is
\begin{equation*}
f_{W}(w)=\frac{k}{4}I^{n-m+1,0}_{n-m,n-m+1}\Bigg[\frac{2^{-\frac{m}{2}}w}{4\prod_{i=1}^m\sigma_i}\left|
\begin{matrix}
    (a_i+b_i-1,1,1)_{m+1,n}\\ 
    \left(0,\frac{1}{2},m\right)&(a_i-1,1,1)_{m+1,n}
  \end{matrix}
\right.\Bigg],\ w>0,
\end{equation*}
where $k=\prod_{i=1}^m\frac{1}{\sqrt{2\pi}\sigma_i}\prod_{i=m+1}^n\frac{\Gamma(a_i+b_i)}{\Gamma(a_i)} $.\\ Similarly by using (\ref{4.4}) the pdf of rv $Z$ is 
\begin{equation*}
f_{Z}(z)=k^*I^{0,n-m+1}_{n-m+1,n-m}\Bigg[z\prod_{i=1}^m\sqrt{2}\sigma_i\left|
\begin{matrix}
    \left(0,\frac{1}{2},m\right)&(-a_i,1,1)_{m+1,n}\\ 
    (-a_i-b_i,1,1)_{m+1,n}
  \end{matrix}
\right.\Bigg],\ z>0,
\end{equation*}
where $k^*=\prod_{i=1}^m\sqrt{\frac{2}{\pi}}\sigma_i\prod_{i=m+1}^n\frac{\Gamma(a_i+b_i)}{\Gamma(a_i)} $. 
\end{example}

\begin{example}\label{e4.2}
Let $X$ be an exponential rv with pdf given by (\ref{m3.9}). Consider $Y=\theta X$ and $Z=X^{\frac{1}{\theta}}$ where $\theta>0$. Using (\ref{n4.3}) the pdf of rv $Y$ is
\begin{equation*}
f_Y(y)=\frac{1}{\beta}I^{1,0}_{0,1}\Bigg[\frac{y}{\beta}\left|
\begin{matrix}
    \\ 
    (0,1,1)
  \end{matrix}
\right.\Bigg],\ \ y>0,
\end{equation*}
where $\beta=\lambda\theta$. Now by using (\ref{4.3}) the pdf of rv $Z$ is 
\begin{equation*}
f_Z(z)=\frac{1}{\lambda^{\frac{1}{\theta}}}I^{1,0}_{0,1}\Bigg[\frac{z}{\lambda^{\frac{1}{\theta}}}\left|
\begin{matrix}
    \\ 
    \left(1-\frac{1}{\theta},\frac{1}{\theta},1\right)
  \end{matrix}
\right.\Bigg],\ \  z>0.
\end{equation*}
Thus, $Y$ is exponentially distributed with parameter $\beta$, whereas $Z$ follows the Weibull distribution (\ref{n3.9}) with parameters $\lambda,\theta$ (see \cite{Leemis45}).
\end{example}

\begin{theorem}\label{t4.3}
Let $X_1$and $X_2$ be two independent $I$-function rv's with pdf's given by (\ref{r4.1}) for $l=1,2$. Then, the pdf of the rv $Y=\frac{X_1}{X_2}$ is given by
\begin{equation*}
f_{Y}(y)=kI^{m,n}_{p,q}\Bigg[\delta y\left|
\begin{matrix}
    (a_{i},A_{i},\alpha_{i})_{1,p}\\ 
    (b_{j},B_{j},\beta_{j})_{1,q}
  \end{matrix}
\right.\Bigg],\ \ y>0,
\end{equation*}
where $k=\frac{\delta_1^{\sigma_1^{-1}}\delta_2^{-\sigma_2^{-1}}}{\chi_1(\sigma_1^{-1})\chi_2(\sigma_2^{-1})}$, $m=m_1+n_2$, $n=n_1+m_2$, $p=p_1+q_2$, $q=q_1+p_2$, $\delta=\delta_1^{\sigma_1^{-1}}\delta_2^{-\sigma_2^{-1}}$ and the sequence of parameters are
\begin{equation*}
(a_{i},A_{i},\alpha_{i})_{1,p}=(a_{1i},\sigma_1^{-1}A_{1i},\alpha_{1i})_{1,n_1}(1-b_{2j}-2\sigma_2^{-1}B_{2j},\sigma_2^{-1}B_{2j},\beta_{2j})_{1,q_2}(a_{1i},A_{1i},\sigma_1^{-1}\alpha_{1i})_{n_1+1,p_1},
\end{equation*}
\begin{equation*}
(b_{j},B_{j},\beta_{j})_{1,q}=(b_{1j},\sigma_1^{-1}B_{1j},\beta_{1j})_{1,m_1}(1-a_{2i}-2\sigma_2^{-1}A_{2i},\sigma_2^{-1}A_{2i},\alpha_{2i})_{1,p_2}(b_{1j},\sigma_1^{-1}B_{1j},\beta_{1j})_{m_1+1,q_1}.
\end{equation*}
\end{theorem}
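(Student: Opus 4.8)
The plan is to follow the Mellin-inversion scheme used for Theorem \ref{t4.1}, but to route it through Epstein's quotient identity (\ref{2.5}) rather than the product identity (\ref{2.6}). First I would record the Mellin transform of each factor. As in the computation of the moments in Section 4, the normalized density (\ref{r4.1}) has, for $l=1,2$,
\[
\mathcal{M}_{f_{X_l}}(s)=\frac{\delta_l^{\sigma_l^{-1}}\,\chi_l(\sigma_l^{-1}s)}{\delta_l^{\sigma_l^{-1}s}\,\chi_l(\sigma_l^{-1})},
\]
which is immediate from (\ref{n3.1}) together with the value of the normalizing constant.

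I would then apply (\ref{2.5}) to $Y=X_1/X_2$, so that $\mathcal{M}_{f_Y}(s)=\mathcal{M}_{f_{X_1}}(s)\,\mathcal{M}_{f_{X_2}}(2-s)$, and simplify. Collecting the powers of $\delta_1,\delta_2$, the piece $\delta_2^{\sigma_2^{-1}}/\delta_2^{\sigma_2^{-1}(2-s)}=\delta_2^{-\sigma_2^{-1}}\delta_2^{\sigma_2^{-1}s}$ combines with the $X_1$-factors to give the constant $k=\delta_1^{\sigma_1^{-1}}\delta_2^{-\sigma_2^{-1}}/(\chi_1(\sigma_1^{-1})\chi_2(\sigma_2^{-1}))$ and an $s$-dependent term $(\delta_1^{\sigma_1^{-1}}\delta_2^{-\sigma_2^{-1}})^{-s}=\delta^{-s}$ with $\delta=\delta_1^{\sigma_1^{-1}}\delta_2^{-\sigma_2^{-1}}$. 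What is left under the transform is the product $\chi_1(\sigma_1^{-1}s)\,\chi_2(\sigma_2^{-1}(2-s))$, and the inverse Mellin transform (\ref{2.2}) then gives
\[
f_Y(y)=k\,\frac{1}{2\pi i}\int_{c-i\infty}^{c+i\infty}\chi_1(\sigma_1^{-1}s)\,\chi_2(\sigma_2^{-1}(2-s))\,(\delta y)^{-s}\,ds.
\]
The theorem follows once this integrand is recognized, through (\ref{1.2}), as the $\chi$-function of a single $I^{m,n}_{p,q}$ evaluated at $\delta y$.

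The substance of the argument is this last identification. The factor $\chi_1(\sigma_1^{-1}s)$ contributes its four gamma families with type unchanged and only $A_{1i},B_{1j}$ rescaled by $\sigma_1^{-1}$, producing the blocks $(a_{1i},\sigma_1^{-1}A_{1i},\alpha_{1i})$ and $(b_{1j},\sigma_1^{-1}B_{1j},\beta_{1j})$ in the two rows. In $\chi_2(\sigma_2^{-1}(2-s))$ the substitution $w=\sigma_2^{-1}(2-s)=2\sigma_2^{-1}-\sigma_2^{-1}s$ reverses the sign of the coefficient of $s$ in every gamma argument; hence each factor of $\chi_2$ keeps its numerator or denominator status but migrates from the upper to the lower row and vice versa, which is exactly the effect of the reciprocal identity (\ref{2.8}) and reflects that $X_1/X_2=X_1\cdot X_2^{-1}$. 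Carrying along the shift $2\sigma_2^{-1}$, the $p_2$ upper gammas of $\chi_2$ collapse into the single lower-row block $(1-a_{2i}-2\sigma_2^{-1}A_{2i},\sigma_2^{-1}A_{2i},\alpha_{2i})_{1,p_2}$ and the $q_2$ lower gammas into the upper-row block $(1-b_{2j}-2\sigma_2^{-1}B_{2j},\sigma_2^{-1}B_{2j},\beta_{2j})_{1,q_2}$. Counting the factors gives $m=m_1+n_2$, $n=n_1+m_2$, $p=p_1+q_2$ and $q=q_1+p_2$, matching the statement.

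The one place demanding care is the ordering within these rows. Each $\chi_2$-block must be inserted as the middle block of its row so that its first $n_2$ entries (for the lower row) and first $m_2$ entries (for the upper row), which are precisely the images of the $\chi_2$ numerator, occupy the numerator slots $1,\dots,m$ and $1,\dots,n$; the denominator images then fall after them, ahead of the trailing $\chi_1$ denominator blocks. Verifying that this prescribed interleaving does place every numerator gamma in the correct position is the main bookkeeping obstacle. A convenient cross-check is to compute the law of $X_2^{-1}$ from the $r=-1$ case (\ref{4.4}) of the power theorem and then apply Theorem \ref{t4.1} to the product $X_1\cdot X_2^{-1}$; this independent route reproduces the same parameter lists and confirms the ordering.
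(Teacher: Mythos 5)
Your proposal is correct and follows essentially the same route as the paper: Epstein's quotient identity (\ref{2.5}) applied to the Mellin transforms obtained from (\ref{n3.1}), followed by Mellin inversion via (\ref{2.2}) and identification of the resulting contour integral with the definition (\ref{1.2}). The extra bookkeeping you spell out for the migration of the $\chi_2$ gamma factors between rows, and the cross-check via Theorem \ref{t4.1} and the $r=-1$ power case, are elaborations of steps the paper leaves implicit.
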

\begin{proof}
From (\ref{2.5}), we have for $y>0$,
\begin{eqnarray*}
f_Y(y) &=&\mathcal{M}^{-1}\left[\mathcal{M}_{f_{X_1}}(s)\mathcal{M}_{f_{X_2}}(2-s)\right]\\
&=&\frac{\delta_1^{\sigma_1^{-1}}\delta_2^{-\sigma_2^{-1}}}{\chi_1(\sigma_1^{-1})\chi_2(\sigma_2^{-1})}\frac{1}{2\pi i}\int_{c-i\infty}^{c+i\infty}\chi_1(\sigma_1^{-1}s)\chi_2(2\sigma_2^{-1}-\sigma_2^{-1}s)\left(y\delta_1^{\sigma_1^{-1}}\delta_2^{-\sigma_2^{-1}}\right)^{-s}\,ds,
\end{eqnarray*}
which on using the definition of the $I$-function (\ref{1.2}) completes the proof.
\end{proof}

\begin{example}
Let $X_1,X_2$ be two independent half-normal rv's (\ref{m3.11}) with pdf's 
\begin{equation*}
f_{X_i}(x_i)=\frac{1}{\lambda_i\sqrt{2\pi}}I^{1,0}_{0,1}\Bigg[\frac{x_i}{\lambda_i\sqrt{2}}\left|
\begin{matrix}
    \\ 
    \left(0,\frac{1}{2},1\right)
  \end{matrix}
\right.\Bigg],\ \ x_i>0,
\end{equation*}
where the parameters $\lambda_i>0$ for $i=1,2$. Using Theorem \ref{t4.3}, the rv $Y=X_1/X_2$ follows Cauchy distribution (\ref{m3.12}) with pdf
\begin{equation*}
f_Y(y)=\frac{1}{\lambda\pi}I^{1,1}_{1,1}\Bigg[\frac{y}{\lambda}\left|
\begin{matrix}
    \left(0,\frac{1}{2},1\right)\\
    \left(0,\frac{1}{2},1\right)
  \end{matrix}
\right.\Bigg],\ \ y>0,
\end{equation*}
where the parameter $\lambda=\lambda_1/\lambda_2$ (see \cite{Leemis45}).
\end{example}
\subsection{The distribution based on the product of two $I$-functions}
Let $X$ be a rv with pdf
\begin{equation}\label{nn4.1}
f_X(x) =\left\{
	\begin{array}{ll}
	   k_1I^{m,n}_{p,q}\Bigg[\delta x^{\sigma}\left|
\begin{matrix}
    (a_i,A_i,\alpha_i)_{1,p}\\ 
    (b_j,B_j,\beta_j)_{1,q}
  \end{matrix}
\right.\Bigg]
I^{k,l}_{u,v}\Bigg[\eta x^\mu\left|
\begin{matrix}
    (c_i,C_i,\gamma_i)_{1,u}\\ 
    (d_j,D_j,\rho_j)_{1,v}
  \end{matrix}
\right.\Bigg], & \mbox{} x>0,\\\\
		0  & \mbox{} \mathrm{otherwise},
	\end{array}
\right.
\end{equation}
where $\sigma>0,\mu\geq0$ and $\delta,\eta\neq0$. Also, $(a_i,A_i,\alpha_i)_{1,p},(b_j,B_j,\beta_j)_{1,q},(c_i,C_i,\gamma_i)_{1,u},(d_j,D_j,\rho_j)_{1,v},$ $m,n,k,l$ must conform to those restrictions in the definition of the $I$-function (\ref{1.2}), such that $f_X(x)\geq 0$. Using Lemma \ref{l3.1}, the normalizing constant $k_1$ is
\begin{equation*}
k_1^{-1}=\frac{\sigma^{-1}}{\delta^{\sigma^{-1}}}I^{n+k,m+l}_{q+u,p+v}\Bigg[\frac{\eta}{\delta^{\sigma^{-1}\mu}}\left|
\begin{matrix}
    (c_i,C_i,\gamma_i)_{1,l}&(1-b_j-\sigma^{-1}B_j,\mu\sigma^{-1} B_j,\beta_j)_{1,q}&(c_i,C_i,\gamma_i)_{l+1,u}\\ 
    (d_j,D_j,\rho_j)_{1,k}&(1-a_i-\sigma^{-1}A_i,\mu\sigma^{-1} A_i,\alpha_i)_{1,p}&(d_j,D_j,\rho_j)_{k+1,v}
  \end{matrix}
\right.\Bigg].
\end{equation*}
\subsubsection*{Special cases}
\noindent (i) The generalized gamma distribution associated with Bessel function (see \cite{Sebastian631}) has density
\begin{eqnarray}\label{m4.4}
f_X(x)&=&\frac{a^{\alpha}}{\Gamma(\alpha)e^{\frac{\lambda}{a}}}x^{\alpha-1}e^{-ax}{}_0F_1\left(-;\alpha;\lambda x\right),\ \ \ \  x>0,\nonumber\\
&=&ae^{-\frac{\lambda}{a}}I^{1,0}_{0,1}\Bigg[ax\left|
\begin{matrix}
    \\ 
    (\alpha-1,1,1)
  \end{matrix}
\right.\Bigg]I^{1,0}_{0,2}\Bigg[-\lambda x\left|
\begin{matrix}
    \\ 
    (0,1,1)&(1-\alpha,1,1)
  \end{matrix}
\right.\Bigg],
\end{eqnarray}
where $\alpha,a>0$ and $\lambda\neq 0$.\\
\noindent (ii) The non-central chi-square distribution $\left(\mathrm{put}\ \alpha=\frac{\nu}{2},a=\frac{1}{2}\ \mathrm{and}\ \lambda=\frac{\beta}{4}\ \mathrm{in}\ (\ref{m4.4})\right)$ is
\begin{eqnarray}\label{rt4.4}
f_X(x)&=&\frac{1}{2^{\frac{\nu}{2}}\Gamma(\frac{\nu}{2})}x^{\frac{\nu}{2}-1}e^{-\frac{(x+\beta)}{2}}{}_0F_1\left(-;\frac{\nu}{2};\frac{\beta x}{4}\right),\ \ \ \  x>0,\nonumber\\
&=&\frac{e^{-\frac{\beta}{2}}}{2}I^{1,0}_{0,1}\Bigg[\frac{x}{2}\left|
\begin{matrix}
    \\ 
    (\frac{\nu}{2}-1,1,1)
  \end{matrix}
\right.\Bigg]I^{1,0}_{0,2}\Bigg[-\frac{\beta x}{4}\left|
\begin{matrix}
    \\ 
    (0,1,1)&(1-\frac{\nu}{2},1,1)
  \end{matrix}
\right.\Bigg],
\end{eqnarray}
where $\nu\in\mathbb{N}$ is the degree of freedom and $\beta>0$.\\
\noindent (iii) The generalized gamma distribution associated with confluent hypergeometric function (see \cite{Ghitany727}) has density
\begin{eqnarray}\label{rts4.4}
f_X(x)&=&\frac{a^{\beta}(a+1)^{\alpha-\beta}}{\Gamma(\alpha)}x^{\alpha-1}e^{-(a+1)x}{}_1F_1\left(\beta;\alpha;x\right),\ \ \ \  x>0,\nonumber\\
&=&\frac{a^{\beta}(a+1)^{1-\beta}}{\Gamma(\beta)}I^{1,0}_{0,1}\Bigg[(a+1)x\left|
\begin{matrix}
    \\ 
    (\alpha-1,1,1)
  \end{matrix}
\right.\Bigg]I^{1,1}_{1,2}\Bigg[-x\left|
\begin{matrix}
    (1-\beta,1,1)\\ 
    (0,1,1)&(1-\alpha,1,1)
  \end{matrix}
\right.\Bigg],\nonumber\\
\end{eqnarray}
where $\alpha,\beta,a>0$.
\begin{remark}
For $k=u=v=1,l=\mu=0,|\eta|<1$ and $(c_1,C_1,\gamma_1)=(1,1,1),(d_1,D_1,\rho_1)=(0,1,1)$ the pdf (\ref{nn4.1}) reduces to (\ref{4.1}).
\end{remark}

\section{The $I$-Function inverse Gaussian Distribution}
\setcounter{equation}{0}
\noindent Consider a rv $X>0$ with pdf
\begin{equation}\label{5.1}
f_X(x) =\left\{
	\begin{array}{ll}
	    k_2x^{\alpha-1}e^{-ax-bx^{-1}}I^{m,n}_{p,q}\Bigg[\delta x^{\sigma}\left|
\begin{matrix}
    (a_i,A_i,\alpha_i)_{1,p}\\ 
    (b_j,B_j,\beta_j)_{1,q}
  \end{matrix}
\right.\Bigg], & \mbox{} x>0,\\\\
		0  & \mbox{} \mathrm{otherwise},
	\end{array}
\right.
\end{equation}
with the following restriction on parameters:
\begin{eqnarray*}
&&a>0,\ b\geq 0\ \mathrm{if}\ \alpha> 0,\\
&&a\geq 0,\ b>0\ \mathrm{if}\ \alpha< 0,\\
&&a>0,\ b>0\ \mathrm{if}\ \alpha=0.
\end{eqnarray*}
Further, $\sigma\geq0,\delta\neq 0$ and $(a_i,A_i,\alpha_i)_{1,p},(b_j,B_j,\beta_j)_{1,q},m,n$ must conform to those restrictions in the definition of the $I$-function (\ref{1.2}), such that $f_X(x)\geq 0$. In order to obtain the normalizing constant $k_2$ of the $I$-function inverse Gaussian (I-FIG) distribution, we first determine its Mellin transform.
\begin{theorem}
The Mellin transform of the I-FIG distribution is
\begin{eqnarray*}
&&\mathcal{M}_{f_X}(s)\\
&&=k_2\sum_{l=0}^{\infty}\left\{ a^{-\alpha-s+1}I^{m+1,n+1}_{p+2,q+1}\left.\Bigg[\frac{\delta}{a^{\sigma}}\right|
\begin{matrix}
    (2-\alpha-s,\sigma,1)&(a_i,A_i,\alpha_i)_{1,p}&(2+l-\alpha-s,\sigma,1)\\ 
    (2-\alpha-s,\sigma,1)&(b_j,B_j,\beta_j)_{1,q}
  \end{matrix}
\Bigg]\right.\\
&&\ \ \left.-\ b^{\alpha+s-1}I^{m+1,n+1}_{p+1,q+2}\left.\Bigg[b^\sigma\delta\right|
\begin{matrix}
    (2-\alpha-s,\sigma,1)&(a_i,A_i,\alpha_i)_{1,p}\\ 
    (2-\alpha-s,\sigma,1)&(b_j,B_j,\beta_j)_{1,q}&(1-l-\alpha-s,\sigma,1)
  \end{matrix}
\Bigg]\right\}\frac{(ab)^l}{l!}.
\end{eqnarray*}
\end{theorem}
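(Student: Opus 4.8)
The plan is to reduce the Mellin transform to a single Mellin--Barnes contour integral in which the exponential factor $e^{-ax-bx^{-1}}$ has been integrated out against the power $x^{\nu-1}$, and then to expand the resulting Bessel function into the two power series that produce the two $I$-functions in the statement.

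First I would write, from $(\ref{2.1})$,
\[
\mathcal{M}_{f_X}(s)=k_2\int_0^\infty x^{s+\alpha-2}e^{-ax-bx^{-1}}I^{m,n}_{p,q}\!\Big[\delta x^\sigma\,\Big|\cdots\Big]\,dx,
\]
and replace the $I$-function by its contour representation $(\ref{1.2})$, $I(\delta x^\sigma)=\frac{1}{2\pi i}\int_C\chi(w)\delta^{-w}x^{-\sigma w}\,dw$. Assuming absolute convergence, Fubini's theorem lets me interchange the $x$- and $w$-integrations, so that
\[
\mathcal{M}_{f_X}(s)=\frac{k_2}{2\pi i}\int_C\chi(w)\delta^{-w}\Big(\int_0^\infty x^{\nu-1}e^{-ax-bx^{-1}}\,dx\Big)\,dw,\qquad \nu:=s+\alpha-1-\sigma w.
\]
The whole problem is thereby localized in the inner $x$-integral, whose exponent $\nu$ depends linearly on the contour variable $w$.

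Next I would evaluate the inner integral by the standard formula $\int_0^\infty x^{\nu-1}e^{-ax-bx^{-1}}\,dx=2(b/a)^{\nu/2}K_\nu(2\sqrt{ab})$, valid for $a,b>0$ (the degenerate cases permitted by the parameter restrictions being recovered by continuation), and then expand $K_\nu$ through the relation $K_\nu(z)=\frac{\pi}{2\sin(\nu\pi)}\big(I_{-\nu}(z)-I_\nu(z)\big)$ together with the power series $I_{\pm\nu}(z)=\sum_{l\ge0}\frac{(z/2)^{\pm\nu+2l}}{l!\,\Gamma(\pm\nu+l+1)}$. With $z=2\sqrt{ab}$ the common factor $(z/2)^{2l}=(ab)^l$ appears, the $a$- and $b$-dependence separates, and after using the reflection formula $\pi/\sin(\nu\pi)=\Gamma(\nu)\Gamma(1-\nu)$ the inner integral becomes
\[
\Gamma(\nu)\Gamma(1-\nu)\sum_{l=0}^\infty\frac{(ab)^l}{l!}\Big(\frac{a^{-\nu}}{\Gamma(l-\nu+1)}-\frac{b^{\nu}}{\Gamma(l+\nu+1)}\Big).
\]
This is exactly the two-term, $(ab)^l/l!$ structure of the assertion; the first group carries the factor $a^{-\nu}$ and the second the factor $b^{\nu}$.

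Finally I would substitute this expansion back, interchange the summation with the $w$-contour integral, and read off the two $I$-functions. Writing $\nu=s+\alpha-1-\sigma w$, the three new Gamma factors become $\Gamma(s+\alpha-1-\sigma w)=\Gamma\big(1-(2-\alpha-s)-\sigma w\big)$, $\Gamma(2-\alpha-s+\sigma w)$, and, in the denominator, either $\Gamma(l+2-\alpha-s+\sigma w)$ (first group) or $\Gamma\big(1-(1-l-\alpha-s)-\sigma w\big)$ (second group); collecting the powers of $a,b,\delta$ sends the contour argument to $\delta/a^\sigma$ with prefactor $a^{-\alpha-s+1}$ in the first group and to $b^\sigma\delta$ with prefactor $-b^{\alpha+s-1}$ in the second. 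Comparing with $(\ref{1.3})$, the first group adjoins to $\chi$ one numerator $\Gamma(1-a-As)$-factor, one numerator $\Gamma(b+Bs)$-factor and one denominator $\Gamma(a+As)$-factor, yielding $I^{m+1,n+1}_{p+2,q+1}$, while the second group adjoins the same two numerator factors and instead one denominator $\Gamma(1-b-Bs)$-factor, yielding $I^{m+1,n+1}_{p+1,q+2}$, with the parameter triples $(2-\alpha-s,\sigma,1)$, $(2+l-\alpha-s,\sigma,1)$ and $(1-l-\alpha-s,\sigma,1)$ occupying the slots indicated. The main obstacle is analytic rather than algebraic: justifying the two interchanges (Fubini for the $(x,w)$ double integral and the swap of $\sum_l$ with the contour integral) from the convergence parameters $\nabla,\Delta,\Omega$ recorded after $(\ref{1.2})$, and keeping precise track of which adjoined Gamma factor occupies which $(a_i,A_i,\alpha_i)$ or $(b_j,B_j,\beta_j)$ position so that the orders $m+1,n+1$ and $p+2,q+1$ (resp.\ $p+1,q+2$) come out correctly; the Bessel-series step itself is routine once the inner integral is identified.
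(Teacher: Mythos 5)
Your proposal follows the paper's own proof essentially step for step: Mellin--Barnes representation of the $I$-function, interchange of the $x$- and $w$-integrations, evaluation of the inner integral as $2(b/a)^{\nu/2}K_\nu(2\sqrt{ab})$, expansion of $K_\nu$ via $I_{\pm\nu}$ together with the reflection formula, and identification of the two resulting contour integrals as the stated $I$-functions. The parameter bookkeeping you describe (which adjoined Gamma factor lands in which slot, giving $I^{m+1,n+1}_{p+2,q+1}$ and $I^{m+1,n+1}_{p+1,q+2}$) matches the paper's computation exactly.
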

\begin{proof} Using the definition of Mellin transform and assuming the conditions for absolute convergence of integral involved, we have
\begin{eqnarray}\label{5.2}
\mathcal{M}_{f_X}(s)&=&k_2\int_0^\infty x^{\alpha+s-2}e^{-ax-bx^{-1}}I^{m,n}_{p,q}\left.\Bigg[\delta x^{\sigma}\right|
\begin{matrix}
    (a_i,A_i,\alpha_i)_{1,p}\\ 
    (b_j,B_j,\beta_j)_{1,q}
  \end{matrix}
\Bigg]\,dx\nonumber\\
&=&k_2\int_0^\infty x^{\alpha+s-2}e^{-ax-bx^{-1}}\left\{\frac{1}{2\pi i}\int_{C}\chi(w)(\delta x^{\sigma})^{-w}\,dw\right\}\,dx\nonumber\\
&=&\frac{k_2}{2\pi i}\int_{C}\left\{\int_0^\infty x^{\alpha+s-\sigma w-2}e^{-ax-bx^{-1}}\,dx\right\}\chi(w)\delta^{-w}\,dw.
\end{eqnarray}
Now
\begin{equation}\label{5.3}
\int_0^\infty x^{\alpha+s-\sigma w-2}e^{-ax-bx^{-1}}\,dx=2\left(\sqrt{\frac{b}{a}}\right)^{\alpha+s-\sigma w-1}K_{\alpha+s-\sigma w-1}(2\sqrt{ab}),
\end{equation}
where $K_{\nu}(x)$ is the modified Bessel function of the second kind, also called Macdonald function, defined by
\begin{equation}\label{5.4}
K_{\nu}(x)=\frac{\pi\left(I_{-\nu}(x)-I_{\nu}(x)\right)}{2\sin\nu\pi},
\end{equation}
where $I_{\nu}(x)$ is the modified Bessel function of the first kind with the following series representation:
\begin{equation}\label{5.5}
I_{\nu}(x)=\sum_{l=0}^{\infty}\frac{1}{\Gamma(l+\nu+1)l!}\left(\frac{x}{2}\right)^{2l+\nu}.
\end{equation}
Using (\ref{5.3})-(\ref{5.5}) and gamma reflection principle in (\ref{5.2}), we have 
\begin{eqnarray*}
\mathcal{M}_{f_X}(s)&=&\frac{k_2}{2\pi i}\int_{C}\sum_{l=0}^{\infty}\left\{a^{-\alpha-s+1}\frac{\Gamma(\alpha+s-\sigma w-1)\Gamma(2-\alpha-s+\sigma w) \chi(w)}{\Gamma(l-\alpha-s+\sigma w+2)}\left(\frac{\delta}{a^{\sigma}}\right)^{- w}\right.\\
&&\ \ \ \ \left.-b^{\alpha+s-1}\frac{\Gamma(\alpha+s-\sigma w-1)\Gamma(2-\alpha-s+\sigma w)\chi(w)}{\Gamma(l+\alpha+s-\sigma w)}\left(b^{\sigma}\delta\right)^{-w}\right\}\frac{(ab)^l}{l!}\,dw.
\end{eqnarray*}
Finally, by using the definition of the $I$-function (\ref{1.2}), the proof is complete.
\end{proof}
\begin{corollary}
The $r$-th moment for the I-FIG distribution is
\begin{eqnarray*}
\mu_r'&=&k_2\sum_{l=0}^{\infty}\left\{ a^{-\alpha-r}I^{m+1,n+1}_{p+2,q+1}\left.\Bigg[\frac{\delta}{a^{\sigma}}\right|
\begin{matrix}
    (1-\alpha-r,\sigma,1)&(a_i,A_i,\alpha_i)_{1,p}&(1+l-\alpha-r,\sigma,1)\\ 
    (1-\alpha-r,\sigma,1)&(b_j,B_j,\beta_j)_{1,q}
  \end{matrix}
\Bigg]\right.\\
&&\ \ \left.-\ b^{\alpha+r}I^{m+1,n+1}_{p+1,q+2}\left.\Bigg[b^\sigma\delta\right|
\begin{matrix}
    (1-\alpha-r,\sigma,1)&(a_i,A_i,\alpha_i)_{1,p}\\ 
    (1-\alpha-r,\sigma,1)&(b_j,B_j,\beta_j)_{1,q}&(-l-\alpha-r,\sigma,1)
  \end{matrix}
\Bigg]\right\}\frac{(ab)^l}{l!}.
\end{eqnarray*}
Hence, the normalizing constant of the I-FIG distribution is given by
\begin{eqnarray*}
k_2^{-1}&=&\sum_{l=0}^{\infty}\left\{ a^{-\alpha}I^{m+1,n+1}_{p+2,q+1}\left.\Bigg[\frac{\delta}{a^{\sigma}}\right|
\begin{matrix}
    (1-\alpha,\sigma,1)&(a_i,A_i,\alpha_i)_{1,p}&(1+l-\alpha,\sigma,1)\\ 
    (1-\alpha,\sigma,1)&(b_j,B_j,\beta_j)_{1,q}
  \end{matrix}
\Bigg]\right.\\
&&\ \ \ \ \ \ \ \ \ \left.-\ b^{\alpha}I^{m+1,n+1}_{p+1,q+2}\left.\Bigg[b^{\sigma}\delta\right|
\begin{matrix}
    (1-\alpha,\sigma,1)&(a_i,A_i,\alpha_i)_{1,p}\\ 
    (1-\alpha,\sigma,1)&(b_j,B_j,\beta_j)_{1,q}&(-l-\alpha,\sigma,1)
  \end{matrix}
\Bigg]\right\}\frac{(ab)^l}{l!}.
\end{eqnarray*}
\end{corollary}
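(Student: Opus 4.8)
The plan is to obtain both assertions as immediate specializations of the Mellin transform just established, using the fact that for a positive random variable the raw moments are particular values of the Mellin transform. Indeed, by the definition in (\ref{2.1}) we have $\mathcal{M}_{f_X}(s)=E[X^{s-1}]$, so that $\mu_r'=E[X^r]=\mathcal{M}_{f_X}(r+1)$. Thus the entire task reduces to evaluating the theorem's formula at the point $s=r+1$, and then at $s=1$ for the normalizing constant.

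First I would substitute $s=r+1$ into the expression for $\mathcal{M}_{f_X}(s)$ and simplify the shifted arguments that appear in the two $I$-functions. The relevant reductions are $2-\alpha-s\mapsto 1-\alpha-r$, $a^{-\alpha-s+1}\mapsto a^{-\alpha-r}$, $2+l-\alpha-s\mapsto 1+l-\alpha-r$, $b^{\alpha+s-1}\mapsto b^{\alpha+r}$ and $1-l-\alpha-s\mapsto -l-\alpha-r$. Carrying these substitutions through every parameter triple of the two $I$-functions and through the prefactors of the sum over $l$ reproduces verbatim the claimed expression for $\mu_r'$. The orders $I^{m+1,n+1}_{p+2,q+1}$ and $I^{m+1,n+1}_{p+1,q+2}$ are of course unchanged, since setting $s=r+1$ alters only the values of the parameters, not their number or their position.

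For the normalizing constant I would invoke that $f_X$ is a density, so its total mass is one; in moment notation this is $\mu_0'=1$. Setting $r=0$ in the moment formula collapses the shifts further, giving $1-\alpha-r\mapsto 1-\alpha$, $1+l-\alpha-r\mapsto 1+l-\alpha$, $-l-\alpha-r\mapsto -l-\alpha$, and the powers $a^{-\alpha-r}\mapsto a^{-\alpha}$, $b^{\alpha+r}\mapsto b^{\alpha}$. The resulting identity states that $k_2$ times the displayed $l$-sum equals $1$, and dividing by $k_2$ yields precisely the stated formula for $k_2^{-1}$.

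Since everything is a direct specialization, I anticipate no genuine analytic obstacle; the two points requiring care are the arithmetic of the parameter shifts and the legitimacy of evaluating the Mellin transform at the real points $s=r+1$ and $s=1$ under the parameter restrictions imposed on (\ref{5.1}). Convergence of the sum over $l$ is inherited from the computation behind the theorem: the Bessel series (\ref{5.5}) converges for all arguments, and the absolute convergence needed to interchange summation with the contour integral has already been assumed there, so no new estimate is needed. I therefore expect the main effort to be purely bookkeeping, namely tracking each parameter triple correctly as $s$ is set first to $r+1$ and then to $1$.
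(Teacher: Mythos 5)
Your proposal is correct and is exactly the intended argument: the paper leaves this corollary without an explicit proof, but it follows the same pattern used earlier for the plain $I$-function distribution, namely $\mu_r'=\mathcal{M}_{f_X}(r+1)$ applied to the theorem's Mellin transform, and then $\mu_0'=1$ for $k_2^{-1}$. Your parameter shifts ($2-\alpha-s\mapsto 1-\alpha-r$, etc.) all check out against the stated formulas.
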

\begin{corollary}
The Laplace transform of the I-FIG distribution is
\begin{equation*}
\mathcal{L}_{f_X}(r)=k_2\sum_{l=0}^{\infty}\left\{\frac{1}{(a+r)^{\alpha}}I^{m+1,n+1}_{p+2,q+1}\left.\Bigg[\frac{\delta}{(a+r)^\sigma}\right|
\begin{matrix}
    (1-\alpha,\sigma,1)&(a_i,A_i,\alpha_i)_{1,p}&(1+l-\alpha,\sigma,1)\\ 
    (1-\alpha,\sigma,1)&(b_j,B_j,\beta_j)_{1,q}
  \end{matrix}
\Bigg]\right.\\
\end{equation*}
\begin{equation*}
\ \ \ \ \ \ \ \ \ \ \left.-\ b^{\alpha}I^{m+1,n+1}_{p+1,q+2}\left.\Bigg[b^\sigma\delta\right|
\begin{matrix}
    (1-\alpha,\sigma,1)&(a_i,A_i,\alpha_i)_{1,p}\\ 
    (1-\alpha,\sigma,1)&(b_j,B_j,\beta_j)_{1,q}&(-l-\alpha,\sigma,1)
  \end{matrix}
\Bigg]\right\}\frac{\left((a+r)b\right)^l}{l!}.
\end{equation*}
\end{corollary}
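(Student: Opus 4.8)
The plan is to read this off from the theorem on $\mathcal{M}_{f_X}(s)$ proved above, recognizing that the Laplace integral is just one specialization of that very same computation. Beginning from the definition (\ref{n2.1}) and the density (\ref{5.1}), and merging the two exponentials via $e^{-rx}e^{-ax}=e^{-(a+r)x}$, I would write
\begin{equation*}
\mathcal{L}_{f_X}(r)=k_2\int_0^\infty x^{\alpha-1}e^{-(a+r)x-bx^{-1}}I^{m,n}_{p,q}\Bigg[\delta x^{\sigma}\left|
\begin{matrix}
(a_i,A_i,\alpha_i)_{1,p}\\
(b_j,B_j,\beta_j)_{1,q}
\end{matrix}
\right.\Bigg]\,dx.
\end{equation*}
The key point is that this integrand is exactly the integrand of (\ref{5.2}) after setting $s=1$ and replacing the decay rate $a$ by $a+r$: the power $x^{\alpha+s-2}$ collapses to $x^{\alpha-1}$ at $s=1$, while $e^{-ax}$ becomes $e^{-(a+r)x}$.

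Accordingly, I would rerun the Mellin-transform derivation verbatim under these two substitutions. Expanding the $I$-function through its Mellin--Barnes representation (\ref{1.2}) and interchanging the $x$- and $w$-integrations leads, as before, to an inner $x$-integral of the form (\ref{5.3}), now with decay rate $a+r$, giving $2\big(\sqrt{b/(a+r)}\big)^{\alpha-\sigma w}K_{\alpha-\sigma w}(2\sqrt{(a+r)b})$. Expanding $K_\nu$ through (\ref{5.4})--(\ref{5.5}) and invoking the gamma reflection principle reproduces the same two families of gamma quotients as in the theorem, evaluated at $s=1$ and with each $a$ replaced by $a+r$. Identifying the two resulting contour integrals as $I$-functions via (\ref{1.2}) then yields the claimed formula: $a^{-\alpha-s+1}\to(a+r)^{-\alpha}$, the argument $\delta/a^{\sigma}\to\delta/(a+r)^{\sigma}$, the parameters $(2-\alpha-s,\sigma,1)$ and $(2+l-\alpha-s,\sigma,1)$ become $(1-\alpha,\sigma,1)$ and $(1+l-\alpha,\sigma,1)$, the second ($b$-)term keeps $b^{\sigma}\delta$ with $b^{\alpha+s-1}\to b^{\alpha}$ and $(1-l-\alpha-s,\sigma,1)\to(-l-\alpha,\sigma,1)$, and the series weight $(ab)^l/l!$ becomes $\big((a+r)b\big)^l/l!$.

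The substantive points are analytic rather than algebraic. The interchange of the $x$- and $w$-integrations again requires absolute convergence, but the new factor $e^{-rx}$ only sharpens the decay of the $x$-integrand for $r>0$, so the interchange stays valid under the hypotheses already assumed for (\ref{5.2}). The formula (\ref{5.3}) needs a strictly positive decay rate; since the admissible ranges in (\ref{5.1}) permit $a\geq 0$, one uses $a+r>0$, which holds for all $r>0$ and hence throughout the domain where the Laplace transform is defined. I expect the only real care to lie in the bookkeeping: confirming that the dependence on $a$ is confined precisely to the prefactor $a^{-\alpha-s+1}$, the argument $\delta/a^{\sigma}$, and the series weight $(ab)^l$, and nowhere in the gamma factors or the remaining parameter entries, so that the substitution $a\mapsto a+r$ acts cleanly and the $s=1$ specialization produces exactly the displayed parameter lists.
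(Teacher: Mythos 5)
Your proposal is correct and follows exactly the route the paper intends: the corollary is obtained from the Mellin-transform theorem by observing that $\mathcal{L}_{f_X}(r)$ is the same integral as $\mathcal{M}_{f_X}(s)$ evaluated at $s=1$ with $a$ replaced by $a+r$ in the integrand (but not in $k_2$), and your bookkeeping of where $a$ and $s$ enter the final formula matches the stated result. The paper gives no separate proof of this corollary, so nothing further is needed.
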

\subsubsection*{Special cases}
The generalized inverse Gaussian distribution studied by Barndorff-Nielsen \cite{Nielsen1}.
\begin{eqnarray}\label{n5.6}
f_X(x)&=&\frac{\beta^{\frac{\alpha}{2}}}{2\gamma^{\frac{\alpha}{2}}K_{\alpha}(\sqrt{\beta\gamma})}x^{\alpha-1}e^{-\frac{1}{2}(\beta x+\gamma x^{-1})}\nonumber\\
&=&\frac{\beta^{\frac{\alpha}{2}}}{2\gamma^{\frac{\alpha}{2}}K_{\alpha}(\sqrt{\beta\gamma})}x^{\alpha-1}e^{-\frac{\beta }{2}x-\frac{\gamma}{2}x^{-1}}I^{1,0}_{1,1}\Bigg[\delta\left|
\begin{matrix}
    (1,1,1)\\ 
    (0,1,1)
  \end{matrix}\right.\Bigg],\ \ \ \  x>0,
\end{eqnarray}
where $|\delta|<1,\beta,\gamma>0,\alpha\in\mathbb{R}$. Therefore the inverse Gaussian, inverse gamma and hyperbolic distributions follow as particular cases.\\
(i) The inverse Gaussian distribution $\left(\mathrm{put}\ \alpha=-\frac{1}{2},\beta=\frac{\lambda}{\mu^2}\ \mathrm{and}\ \gamma=\lambda\ \mathrm{in}\ (\ref{n5.6})\right)$.
\begin{eqnarray*}
f_X(x)&=&\sqrt{\frac{\lambda}{2\pi x^3}}e^{-\frac{\lambda(x-\mu)^2}{2\mu^2x}},\\
&=&\sqrt{\frac{\lambda}{2\pi}}e^{\frac{\lambda}{\mu}}x^{-\frac{1}{2}-1}e^{-\frac{\lambda}{2\mu^2}x-\frac{\lambda}{2} x^{-1}}I^{1,0}_{1,1}\Bigg[\delta\left|
\begin{matrix}
    (1,1,1)\\ 
    (0,1,1)
  \end{matrix}\right.\Bigg],\ \ \ \  x>0,
\end{eqnarray*}
where $|\delta|<1$ and $\lambda,\mu>0$. Note that $K_{-\frac{1}{2}}(x)=\sqrt{\frac{\pi}{2}}e^{-x}x^{-\frac{1}{2}}$.\\
(ii) The inverse Gamma distribution.
\begin{eqnarray*}
f_X(x)&=&\frac{\lambda^\theta}{\Gamma(\theta)}x^{-\theta-1}e^{-\lambda x^{-1}},\\
&=&\frac{\lambda^\theta}{\Gamma(\theta)}x^{-\theta-1}e^{-\lambda x^{-1}}I^{1,0}_{1,1}\Bigg[\delta\left|
\begin{matrix}
    (1,1,1)\\ 
    (0,1,1)
  \end{matrix}\right.\Bigg],\ \ \ \  x>0,
\end{eqnarray*}
where $|\delta|<1$ and $\theta,\lambda>0$.
\begin{remark}
When $b=0$, the I-FIG distribution in (\ref{5.1}) belongs to the class of distributions defined by the product of two $I$-functions (\ref{nn4.1}) as follows.
\begin{equation*}
f_X(x) =k_*I^{1,0}_{0,1}\Bigg[ax\left|
\begin{matrix}
    \\ 
    (\alpha-1,1,1)
  \end{matrix}
\right.\Bigg]
I^{m,n}_{p,q}\Bigg[\delta x^{\sigma}\left|
\begin{matrix}
    (a_i,A_i,\alpha_i)_{1,p}\\ 
    (b_j,B_j,\beta_j)_{1,q}
  \end{matrix}
\right.\Bigg],\ \ x>0,
\end{equation*}
where
\begin{equation*}
k_*^{-1}=a^{-\alpha}
I^{m,n+1}_{p+1,q}\Bigg[\frac{\delta}{a^{\sigma}}\left|
\begin{matrix}
    (1-\alpha,\sigma,1)&(a_i,A_i,\alpha_i)_{1,p}\\ 
    (b_j,B_j,\beta_j)_{1,q}
  \end{matrix}
\right.\Bigg].
\end{equation*}
Therefore the distributions with pdf (\ref{m4.4})-(\ref{rts4.4}) are also the particular cases of the I-FIG distribution.
\end{remark}
\printbibliography
\end{document}